\newcommand{\bel}[1]{\begin{equation}\label{#1}}
\newcommand{\be}{\begin{equation}}
\newcommand{\ba}{\begin{eqnarray}}
\newcommand{\ea}{\end{eqnarray}}
\newcommand{\qe}{\end{equation}}
\newcommand{\R}{\mathbb{R}}
\newcommand{\N}{\mathbb{N}}
\newcommand{\Z}{\mathbb{Z}}
\newcommand{\Deg}{\mathrm{Deg}}
\newcommand{\diam}{\mathrm{diam}}
\newcommand{\Hmm}[1]{\leavevmode{\marginpar{\tiny%
$\hbox to 0mm{\hspace*{-0.5mm}$\leftarrow$\hss}%
\vcenter{\vrule depth 0.1mm height 0.1mm width \the\marginparwidth}%
\hbox to
0mm{\hss$\rightarrow$\hspace*{-0.5mm}}$\\\relax\raggedright #1}}}
\newtheorem{theorem}{Theorem}[section]
\newtheorem{lemma}[theorem]{Lemma}
\newtheorem{corollary}[theorem]{Corollary}
\newtheorem{definition}[theorem]{Definition}
\newtheorem{remark}[theorem]{Remark}
\begin{document}

\title[Graphs with nonnegative Ricci curvature and maximum degree at most 3]{Graphs with nonnegative Ricci curvature and maximum degree at most 3}

\author{Fengwen Han}
\address{Fengwen, Han: School of Mathematical Sciences, Fudan University, Shanghai 200433, China}
\email{\href{mailto:fwhan19@fudan.edu.cn}{fwhan19@fudan.edu.cn}}

\author{Tao Wang}
\address{Tao, Wang: School of Mathematical Sciences, Fudan University, Shanghai 200433, China}
\email{\href{mailto:taowang21@m.fudan.edu.cn}{taowang21@m.fudan.edu.cn}}


\begin{abstract}
In this paper, we classify graphs with nonnegative Lin-Lu-Yau-Ollivier Ricci curvature, maximum degree at most 3 and diameter at least 6.
\end{abstract}
\maketitle

\section{Introduction}\label{sec:intro}

Ricci curvature is one of the most important geometric quantities in Riemannian geometry. For discrete settings, various definitions of Ricci curvature were developed over the past few decades. In this paper, we focus on Olliver Ricci curvature and its variations. The original Ollivier Ricci curvature was developed on metric spaces and Markov chains, which can be applied on normalized graphs; see \cite{MR2484937,MR2371483}. In 2011, Lin-Lu-Yau \cite{MR2872958} modified Ollivier's definition. This version was widely used in the research of normalized graphs. M\"unch and Wojciechowski \cite{MR3998765} extended Lin-Lu-Yau's definition to general weighted graphs. Bakry-\'{E}mery curvature is another important  Ricci curvature on graphs, readers may refer to \cite{MR1665591,MR2644381} for more details.

There are significant geometric and analytic results for Riemannian manifolds with nonnegative Ricci curvature. As discrete analogs, graphs with nonnegative Ricci curvature are also worthy to study. A basic method is to explore their local geometric structures. Many papers on this subject were published recently. Cushing et al. \cite{cushing2019graph} classified all 3-regular graphs with nonnegative Bakry-\'{E}mery curvature. They also classified all 3-regular graphs with nonnegative original Ollivier Ricci curvature. Under the setting of Lin-Lu-Yau-Ollivier Ricci curvature, there are many works on the classification of Ricci-flat graphs, i.e. graphs whose Ricci curvature vanishes on every edge; see \cite{cushing2018ricci,MR3263934,MR3843277,he2018ricciflat,bai2021ricci}.

It is interesting to study harmonic functions on graphs. For the continuous case, Yau \cite{MR431040} showed that positive harmonic functions on a complete, noncompact Riemannian manifold with nonnegative Ricci curvature are constant, which is known as Liouville property of harmonic functions. For the discrete case, there are also some known results. Jost, M\"unch and Rose \cite{jost2019liouville} proved that bounded harmonic functions are constant under the nonnegative Lin-Lu-Yau-Ollivier Ricci curvature condition. We also refer to \cite{MR3910593,MR3316971,MR4036571} for Liouville properties under the other curvature conditions. However, the strong Liouville property of graphs with non-negative Lin-Lu-Yau-Ollivier Ricci curvature is still unknown. As an application of the main results, we prove that it is true for those graphs with maximum degree at most three.

Let $\mathcal{G}$ be the class of combinatorial graphs with following properties:

1) Lin-Lu-Yau-Ollivier Ricci curvatures on all edges of $G\in\mathcal{G}$ are nonnegative;

2) Degrees of all vertices of $G\in\mathcal{G}$ are at most 3;

3) Diameters of the graphs in $\mathcal{G}$ are at least 6.

The precise definition of $\mathcal{G}$ is given in section 2. We classify all graphs in $\mathcal{G}$ by the following two theorems.
\begin{theorem}
\label{thm-main-finite}
Let $G\in \mathcal{G}$ be a finite graph. Then $G$ is isomorphic to a finite path, a simple cycle, a prism graph, a M\"obius ladder, a particular graph as shown in Figure \ref{fig-main-1}, or a quasi-ladder graph as shown in Figure \ref{fig-main-2}.
\begin{figure}[H]
\centering
\subfigure[Path]{
\includegraphics[width=0.3\textwidth]{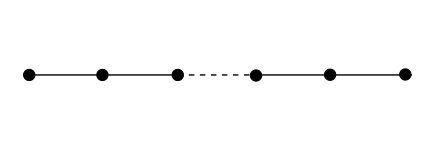}} \quad
\subfigure[Cycle]{
\includegraphics[width=0.2\textwidth]{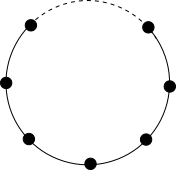}} \quad
\subfigure[Prism graph]{
\includegraphics[width=0.24\textwidth]{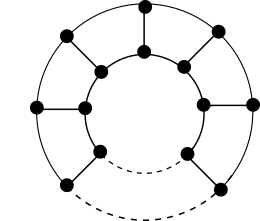}} \quad
\subfigure[M\"obius ladder]{
\includegraphics[width=0.19\textwidth]{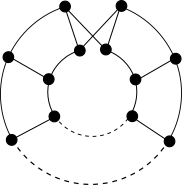}} \qquad
\subfigure[Particular graph]{
\includegraphics[width=0.38\textwidth]{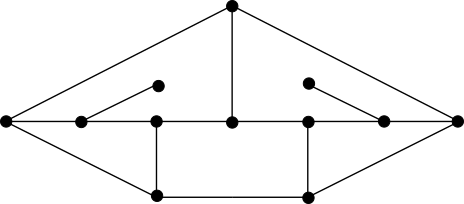}}
\caption{}
\label{fig-main-1}
\end{figure}
\begin{figure}[H]
\centering
\includegraphics[width=0.5\textwidth]{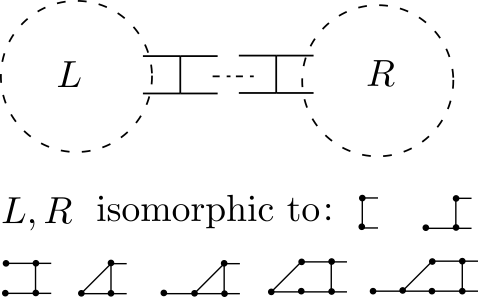}
\caption{Quasi-ladder}
\label{fig-main-2}
\end{figure}
\end{theorem}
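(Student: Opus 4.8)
The plan is to exploit the local structure forced by the nonnegative Lin--Lu--Yau curvature condition on edges, combined with the degree bound $\Delta(G)\le 3$, and then to propagate this structure along a diameter-realizing geodesic of length at least $6$. First I would record the local combinatorial consequences of $\kappa(x,y)\ge 0$ for an edge $xy$ when $d(x),d(y)\le 3$: on such edges the curvature is determined by the number of triangles and $4$-cycles through $xy$ (the relevant formulas, or the graphs they permit, should appear in section~2 or be quoted from \cite{cushing2019graph}). This pins down, for each pair of adjacent degrees $(d(x),d(y))\in\{1,2,3\}^2$, exactly which local neighbourhoods of the edge $xy$ are allowed. The key point is that high local connectivity (many triangles/$4$-cycles) is incompatible with the long, thin geometry imposed by large diameter, so along a geodesic the graph must look locally like a path, a ladder, or a twisted ladder.

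Next I would fix a geodesic $x_0x_1\cdots x_L$ with $L=\operatorname{diam}(G)\ge 6$ and analyze the ``profile'' of degrees and of the edges hanging off this geodesic. The main engine is an inductive/propagation argument: knowing the local picture around $x_i$ and the curvature inequality on $x_ix_{i+1}$ forces the local picture around $x_{i+1}$, with only finitely many branching cases at each step. Degree-$1$ vertices can only occur at the ends (otherwise they would create a shorter competing path or violate the diameter count), so the interior vertices have degree $2$ or $3$. Tracking whether consecutive interior vertices have degree $2$ or $3$, and how their ``extra'' neighbours connect to each other, I expect the curvature constraints to collapse the possibilities to: (a) everything degree $2$ along the geodesic and no extra edges, giving a path or a cycle; (b) a pair of parallel geodesics joined rung-by-rung, giving a prism graph or M\"obius ladder depending on the closing-up identification; (c) a finite-length ``defect'' near one end where the two rails merge, which should produce exactly the particular graph in Figure~\ref{fig-main-1}(e) and the quasi-ladder of Figure~\ref{fig-main-2}. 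Closing up a finite graph then forces one of the two periodic-then-terminating patterns, and checking which closures are consistent with nonnegative curvature on the closing edges eliminates everything except the listed families.

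The hard part, I expect, will be the bookkeeping in the propagation step: showing that the curvature inequality on each geodesic edge genuinely rules out all but the ladder-type continuations, and in particular handling the transition regions where the degree sequence along the geodesic changes (a degree-$2$ vertex adjacent to a degree-$3$ vertex, or a rung that attaches to a vertex off the two main rails). One must also rule out ``spurious'' short cycles or chords that would either shrink the diameter below $6$ or create a vertex of degree $\ge 4$; this requires carefully using that $x_0\cdots x_L$ is a \emph{shortest} path, not merely a path, so that any chord is immediately constrained. A secondary subtlety is the global identification step for finite graphs: once the periodic ladder pattern is established, one must verify that the only ways to finish it off consistently are the finitely many named graphs, which amounts to a short case check on the two endpoints (two caps, one cap and one twist, etc.), using the diameter lower bound to exclude the small sporadic graphs that would otherwise sneak in. I would organize the whole argument as a sequence of lemmas---one fixing the allowed edge-neighbourhoods, one for ``interior vertices have degree $\le 3$ and degree-$1$ only at ends'', one propagation lemma, and one endpoint-closure lemma---so that Theorem~\ref{thm-main-finite} follows by assembling them.
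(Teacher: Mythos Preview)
Your strategy matches the paper's: fix a diameter path $P=v_0\cdots v_l$, catalog the curvature-allowed local configurations on geodesic edges, propagate along $P$, and then handle the global closure. Two tactical refinements in the paper are worth flagging, since your outline glosses over them and they are where the real work sits.

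First, tracking only the degree profile $(2$ vs.\ $3)$ is not fine enough. The paper defines a state function $s(v_i)\in\{2,3^-,3^0,3^+\}$ recording whether the extra neighbour $u_i$ of $v_i$ (when it exists) satisfies $r(u_i)<r(v_i)$, $r(u_i)=r(v_i)$, or $r(u_i)>r(v_i)$, where $r(\cdot)=d(v_0,\cdot)$. The propagation engine (Lemma~\ref{lem-3-1}) is then a complete $4\times4$ table of which pairs $(s(v_{i+1}),s(v_{i+2}))$ are permitted and what subgraph each forces; Corollary~\ref{cor-3-1} extracts the monotonicity (once a $3^+$ appears it persists backward, once a $3^-$ appears it persists forward) that makes the induction terminate. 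The four-way case split in Section~4 is organized by which states occur along $P$, not by degree alone.

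Second, the prism/M\"obius outcome does not fall out of an endpoint-closure check as you suggest. After the ladder structure along $P$ is established, the paper separates off the case where the two ends are connected \emph{through vertices outside the ladder}; Lemma~\ref{lem-4-1} shows this produces a geodesic cycle $C$ of length at least $8$, and Lemma~\ref{lem-4-2} then runs a fresh local-structure argument around $C$ to force cycle, prism, or M\"obius ladder (and to push $|C|\ge 11$). Only when no such outside connection exists does the argument reduce to classifying the finitely many endpoint caps, which yields the quasi-ladders. Your ``closing-up identification'' picture misses that the cycle/prism/M\"obius branch needs this separate geodesic-cycle mechanism rather than arising from the same cap analysis.
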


\begin{theorem}
\label{thm-main-infty}
Let $G\in \mathcal{G}$ be an infinite graph. Then $G$ is isomorphic to one of the following graphs.
\begin{figure}[H]
\centering
\subfigure[Both-side infinite line]{
\includegraphics[width=0.3\textwidth]{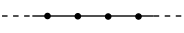}} \ \ 
\subfigure[One-side infinite line]{
\includegraphics[width=0.3\textwidth]{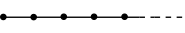}} \ \
\subfigure[Infinite ladder]{
\includegraphics[width=0.3\textwidth]{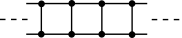}}
\subfigure[Infinite quasi-ladder 1]{
\includegraphics[width=0.3\textwidth]{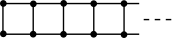}} \ \ 
\subfigure[Infinite quasi-ladder 2]{
\includegraphics[width=0.3\textwidth]{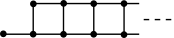}} \ \
\subfigure[Infinite quasi-ladder 3]{
\includegraphics[width=0.3\textwidth]{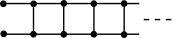}}
\subfigure[Infinite quasi-ladder 4]{
\includegraphics[width=0.3\textwidth]{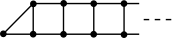}} \ \ 
\subfigure[Infinite quasi-ladder 5]{
\includegraphics[width=0.3\textwidth]{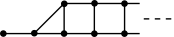}} \ \ 
\subfigure[Infinite quasi-ladder 6]{
\includegraphics[width=0.3\textwidth]{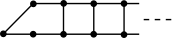}}
\subfigure[Infinite quasi-ladder 7]{
\includegraphics[width=0.3\textwidth]{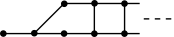}}
\caption{}
\label{fig-main-3}
\end{figure}
\end{theorem}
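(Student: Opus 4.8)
The plan is to bootstrap from the finite classification in Theorem~\ref{thm-main-finite}. The local structure results established in the course of proving Theorem~\ref{thm-main-finite} --- which describe, around each edge and each vertex of degree $3$, the only combinatorial configurations of the $1$- and $2$-balls that are compatible with nonnegative curvature and maximum degree $3$ --- never use finiteness of the graph; they are statements about an arbitrary $G\in\mathcal{G}$. Hence for an infinite $G\in\mathcal{G}$ the same local ``tiles'' are available, and the only thing that changes is that the graph can no longer close up: the new content is an enumeration of the ways these tiles assemble into an infinite pattern, together with the analysis of the ``ends'' of $G$.

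First I would dispose of the degree-$\le 2$ case: if every vertex of $G$ has degree at most $2$, then $G$ is a disjoint union of paths and cycles, so being connected and infinite it is a one-sided or a two-sided infinite line, giving cases (a) and (b) of Figure~\ref{fig-main-3}. From now on fix a vertex $v_0$ of degree $3$. Using the local lemmas, $v_0$ lies on a maximal ``ladder-like segment'': a strip built from the ladder and quasi-ladder tiles that already appear in the finite quasi-ladder of Figure~\ref{fig-main-2}, consisting of two boundary paths joined by rungs. Because $G$ is infinite and locally finite, K\"onig's lemma provides a ray, and a short argument using the rigidity of the tiles (each tile forcing the combinatorial type of the next) shows that this segment must be infinite in at least one direction; moreover $G$ has at most two ends, so the segment is in fact all of $G$ apart from possibly two finite ``caps'' at its ends.

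Next I would carry out the combinatorial enumeration. The curvature inequality, evaluated on the rungs and on the boundary edges where two consecutive tiles meet, severely restricts which tile may follow which; iterating this compatibility relation shows that a bi-infinite segment must be \emph{eventually periodic}, and checking the admissible periods produces the infinite ladder and the infinite quasi-ladders $1$--$7$ of Figure~\ref{fig-main-3}. For a segment that is infinite in only one direction one additionally classifies the finitely many terminal caps --- exactly as in the corresponding step of the proof of Theorem~\ref{thm-main-finite}, where a finite ladder is capped off to form a prism, a M\"obius ladder, the particular graph of Figure~\ref{fig-main-1}, or a finite quasi-ladder --- and keeps those caps that are compatible with an incoming infinite strip; this yields the remaining, half-infinite members of the list. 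Finally one verifies that each graph produced really does lie in $\mathcal{G}$, so the list is exactly Figure~\ref{fig-main-3}.

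The main obstacle is the size and delicacy of this case analysis rather than any single hard idea, and two points need particular care. First, the Lin--Lu--Yau curvature of an edge $xy$ is defined through a Wasserstein distance and is therefore not \emph{a priori} a function of a bounded neighbourhood of $xy$; one must invoke (from the earlier sections) that for maximum degree $3$ the optimal transport only moves mass among vertices that are mutually close, so that the sign of the curvature is actually determined by the $2$-ball around the edge and the local lemmas legitimately apply --- and one must make sure that no ``shortcut'' through a distant part of an infinite $G$ spoils this reduction. Second, one must rule out pathological end behaviour: an infinite locally finite graph can have many ends, and showing that nonnegative curvature together with $\deg\le 3$ forces at most two ends (indeed linear growth) and that every end is ladder-like is what pins the global picture down; this is where the bulk of the work lies.
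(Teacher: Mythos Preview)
Your overall instinct---that the local lemmas from the finite case carry over verbatim---is right, and this is exactly what the paper uses. But the paper's argument is much shorter than what you sketch: it is literally two sentences. Since $G$ is infinite and locally finite, one extracts an infinite geodesic path $P$ isometric to $\Z$ or $\Z^{+}$, and then reruns the case analysis of Theorems~\ref{thm-4-1}--\ref{thm-4-4} along $P$. All of Section~3 is formulated in terms of a fixed geodesic path and the state function $s(v_i)\in\{2,3^{-},3^{0},3^{+}\}$, with no appeal to finiteness, so the same dichotomies (Lemma~\ref{lem-3-1}, Corollary~\ref{cor-3-1}, Lemmas~\ref{lem-3-3} and~\ref{lem-3-2}) apply on the nose. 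The ``caps'' you describe are exactly the left/right endpoint analyses already carried out in the proofs of Theorems~\ref{thm-4-2} and~\ref{thm-4-3}, now performed only at the finite end(s) of $P$.

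Both of the obstacles you flag are non-issues in the paper's framework. First, curvature locality is already settled by Theorem~\ref{thm-tnc}: for adjacent $u\sim v$ the formula involves only $\#B_{uv}$ and the distances $d(w,\phi(w))$ with $w\in B_1(u)$, $\phi(w)\in B_1(v)$; since $w\sim u\sim v\sim\phi(w)$, these distances are at most $3$, so whether $d(w,\phi(w))\le 2$ is decided inside the $2$-ball of the edge and no ``distant shortcut'' can interfere. Second, you never need to count ends or invoke K\"onig's lemma abstractly. The paper's lemmas show that the subgraph induced by $P$ and its neighbours is already a (half-)infinite quasi-ladder, and the same argument that rules out extra vertices in the finite case (the $L/O/R$ decomposition sketched in the introduction, together with Lemmas~\ref{lem-4-1} and~\ref{lem-4-2}) rules them out here. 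So your tile-and-end machinery is a legitimate alternative organisation, but it re-derives structure that the geodesic-path framework gives for free.
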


\begin{remark}
\begin{enumerate}
\item Theorem \ref{thm-main-finite} and Theorem \ref{thm-main-infty} also hold for normalized graphs; see Theorem \ref{thm-2-2}.
\item For normalized graphs, if we change the nonegative Lin-Lu-Yau-Ollivier Ricci curvature constraint to nonnegative Ollivier Ricci curvature, we also get a classification; see Theorem \ref{thm-main-3}.
\item It is well known that manifolds with nonnegative Ricci curvature satisfy Cheeger-Gromoll splitting theorem \cite{MR303460}. As a discrete analog, the splitting theorem also holds for Graphs in $\mathcal{G}$, i.e. if a both-side infinite line is contained in $G\in \mathcal{G}$, then $G$ is the Cartesian product of a line and a graph.
\end{enumerate}
\end{remark}

Bai, Lu and Yau \cite{bai2021ricci} classified all Ricci-flat graphs with maximum degree at most 4. Compared with our results, they require a more stringent curvature condition but relax the maximum degree condition from 3 to 4, which causes a huge increase in the complexity of the problem.

The proofs of the main results are shown in section 4. We give a sketch here. Let $P=v_0v_1\cdots v_l$ be a diameter of $G \in \mathcal{G}$ with $l \ge 6$. The local structure along $P$ can be characterized in Lemma \ref{lem-3-1} , Lemma \ref{lem-3-3} and Lemma \ref{lem-3-2}. Moreover, the subgraph induced by $P$ and its neighbors is a quasi-ladder. We divide $G$ into four parts as shown in Figure \ref{intro}. If $L$ is connected to $R$ via $O$, we will get a special cycle (we call it a geodesic cycle in this paper) by Lemma \ref{lem-4-1}, and we prove $G$ has to be a cycle, a prism or a M\"obius ladder by Lemma \ref{lem-4-2}. If $L$ is not connected to $R$ via $O$, then $O=\emptyset$ and $G$ is a quasi-ladder, otherwise we will find a longer geodesic path. For infinite graphs, the proof is similar. For graphs with small diameter, say $l=4 \text{ or } 5$, the structures are more complicated. However, since the maximum degree is at most 3, graphs with nonnegative curvature and small diameter can be enumerated by computers, which we are not going to explore in this paper.
\begin{figure}[H]
\centering
\includegraphics[width=0.55\textwidth]{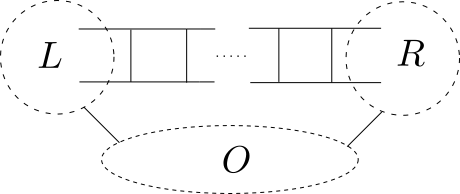}
\caption{}
\label{intro}
\end{figure}
The paper is organized as follows: In next section, we introduce some notions on graphs and the definition of Ricci curvature. Section 3 gives the local structures of graph along a diameter, which serve as basic tools in the following discussions. In section 4, we prove the main results of this paper. In section 5, as an applications of the main results, we show that all positive harmonic functions on $G \in \mathcal{G}$ are constant.


\section{Preliminaries}

\subsection{Basic notions}

Throughout this paper, we always assume that $G(V,E)$ is an undirected, connected, locally finite, simple graph with vertex set $V$ and edge set $E$. Let $|G|:=|V|$ be the number of vertices of the graph. For any vertices $u,v \in V$, we call $v$ a \emph{neighbor} of $u$ if $\{u,v\} \in E$, denoted by $u \sim v$. For notational simplicity, we write $uv$ for the unordered pair $\{u, v\}$. For $A,B \subset V$, let $E(A,B):=\{uv\in E :u\in A,v\in B\}$. The \emph{degree} of a vertex $v\in V$ is defined via $\deg(v):=\#\{u\in V:u\sim v\}$. A graph $H(V',E')$ is called a \emph{subgraph} of $G(V,E)$ if $V' \subseteq V$ and $E' \subseteq E$. A subgraph $H$ is called an \emph{induced subgraph} of $G$ if $E'=\{ \{u,v\} \in E: u,v \in V' \}$, and is denoted by $G(V')$. For $A,B \subseteq V'$, let $d_H(A,B):= \inf \{n: A \ni v_0 \sim \cdots v_n \in B\}$ be the \emph{combinatorial distance} between $A$ and $B$ in subgraph $H(V',E')$. If $H=G$, we omit the subscript, i.e. $d(A,B):=d_G(A,B)$. If $A=\{u\}$ or $B=\{v\}$ is a single vertex subset, we will omit the braces, i.e. $d_H(u,v):=d_H(\{u\},\{v\})$. 

A \emph{walk} $W$ of length $l$ is a sequence of vertices $v_0,v_1,\cdots,v_l$ such that $v_i \sim v_{i+1}$ for $i=0,1,\cdots,l-1$. We write $W=v_0v_1\cdots v_l$ for short. If $v_0=v_l$, we call it a \emph{closed walk}. Moreover, a closed walk  $W=v_0v_1\cdots v_l$ is called a \emph{cycle} if $v_i \ne v_j$ for all $1\leq i,j \leq l$. We also regard a cycle $C=v_0v_1\cdots v_l$ as a subgraph with vertex set $V(C)=\{v_0,v_1 \cdots v_{l-1}\}$ and edge set $E(C)=\{\{v_i,v_{i+1}\}:i=0,1,\cdots l-1\}$. A walk $W=v_0v_1\cdots v_l$ is called a \emph{path} if $v_i \ne v_j$ for all $0\leq i,j \leq l$. We also regard a path $P=v_0v_1\cdots v_l$ as a subgraph with vertex set $V(P)=\{v_0,v_1 \cdots v_l\}$ and edge set $E(P)=\{\{v_i,v_{i+1}\}:i=0,1,\cdots l-1\}$.


Let $H$ be a subgraph of $G$. We say a path $P=v_0v_1\cdots v_l$ is \emph{geodesic} in $H$ if $d_H(v_0,v_l)=l$. It is easy to know that $P=v_0v_1\cdots v_l$ is a geodesic path of $H$ if and only if $d_H(v_i,v_j)=|i-j|$ for all $0\le i,j \le l$. We say $H$ is a \emph{geodesic subgraph} if all geodesic paths in $H$ are also geodesic paths in $G$. It is not hard to verify that $H$ is a geodesic subgraph of $G$ if and only if $d_H(u,v)=d_G(u,v)$ for all $u,v \in H$. A \emph{diameter path} of $G$ is a longest geodesic path $P$ in $G$. We let $\diam(G)$ denote the length of a diameter path in $G$, i.e. $\diam(G)=\sup\{d(u,v):u,v\in V\}$.

A graph $G(V,E)$ is called \emph{rooted at} $p$ if a vertex $p\in V$ is specially appointed, denoted by $(G,p)$. We define $r: V \to \N^0$ via $r(u) = d(p,u)$. If $r(u)>0$, there always exists a $v\sim u$ such that $r(v)=r(u)-1$ and a geodesic path $p\cdots v u$ with length equals to $r(u)$. Given a geodesic path $P=v_0v_1\cdots v_l$ in $G$, we always regard $G$ as a graph rooted at $v_0$ if not specified.


Let $G(V,E)$ be a graph with degree at most 3, $P=v_0v_1\cdots v_l$ be a geodesic path. For all $1\le i \le l-1$, $v_i$ has at most one extra neighbor as $v_{i-1}$ and $v_{i+1}$ are two neighbors of $v_i$. We define a \emph{state function} $s:\{v_i:1\le i \le l-1\} \to \{2,3^-,3^0,3^+\}$ via
\begin{align*}
\begin{split}
s(v_i)=
\begin{cases}
2, \ \ \ \ \ \ \ \ \  & \text{if} \deg(v_i)=2,\\
3^-,  & \text{if } v_i \text{ has an extra neighbor }u_i\text{ and }r(u_i)<r(v_i),\\
3^0,  & \text{if } v_i \text{ has an extra neighbor }u_i\text{ and }r(u_i)=r(v_i),\\
3^+,  & \text{if } v_i \text{ has an extra neighbor }u_i\text{ and }r(u_i)>r(v_i).
\end{cases}
\end{split}
\end{align*}

As figures are quite important in the discussions of this paper, we explain the drawing style here. If not specified, a horizontal segment with vertices represents a geodesic path as shown in (a) of the following figure. If $v_i$ has an extra neighbor $u_i$ and  $r(u_i)<r(v_i)$, i.e. $s(v_i)=3^-$, we draw the figure as shown in (b) such that $u_i$ has the same abscissa with $v_{i-1}$ as $r(u_i)=r(v_{i-1})$. Figure (c) and (d) show the drawings when $r(u_i)=r(v_i)$ and $r(u_i)>r(v_i)$ respectively.

\begin{figure}[htp]
\centering
\subfigure[]{
\includegraphics[width=0.3\textwidth]{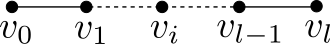}}
\qquad
\subfigure[]{
\includegraphics[width=0.3\textwidth]{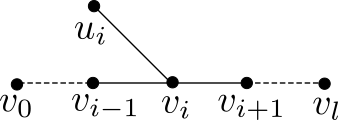}}
\subfigure[]{
\includegraphics[width=0.3\textwidth]{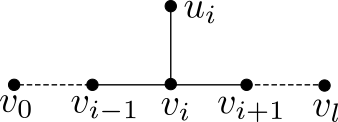}}
\qquad
\subfigure[]{
\includegraphics[width=0.3\textwidth]{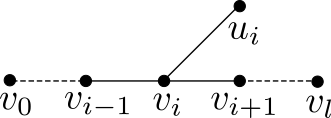}}
\caption{}
\end{figure}

\subsection{Ricci curvature on weighted graphs}

Let
$$w: E\to \mathbb (0,\infty),\ \{x,y\} \mapsto w(x,y)=w(y,x)$$
be the \emph{edge weight} function,
$$m:V\to (0,\infty),\ x\mapsto m(x)$$
be the \emph{vertex weight} function. We call the quadruple $G=(V, E, m,w)$ a \emph{weighted graph}. Unless otherwise specified, we always regard a graph $G(V,E)$ as a \emph{combinatorial graph}, i.e. $w \equiv 1$ and $m \equiv 1$. Our results also hold for \emph{normalized graphs}, i.e. $w \equiv 1$ and $m(u) = \deg(u)$ for all $u \in V$; see Theorem \ref{thm-2-2}.

We define the \emph{Laplacian} $\Delta: \R^V \to \R^V$ via\\
$$\Delta f(u):=\frac{1}{m(u)}\sum_{v\sim u}w(u,v)(f(v)-f(u)).$$
A function $f$ on $G$ is called \emph{harmonic} if $\Delta f \equiv 0$.

For a weighted graph $G(V,E,w,m)$, let $\Deg(u):=\sum_{v\sim u} \frac{w(u,v)}{m(u)}$. We define a probability measure
\begin{align*}
\begin{split}
\mu_u^\varepsilon(v)=
\begin{cases}
1-\varepsilon \Deg(u) \ \ \ \ \ \ \ &:v=u;\\
\varepsilon w(u,v)/m(u)\ &:\text{otherwise}.
\end{cases}
\end{split}
\end{align*}
Particularly, for combinatorial graphs, we have
\begin{align*}
\begin{split}
\mu_u^\varepsilon(v)=
\begin{cases}
1-\varepsilon \deg(u) \ \ \ \ \ \ \ &:v=u;\\
\varepsilon &:\text{otherwise}.
\end{cases}
\end{split}
\end{align*}
For normalized graphs, we have
\begin{align*}
\begin{split}
\mu_u^\varepsilon(v)=
\begin{cases}
1-\varepsilon \ \ \ \ \ \ \ \ \ \ &:v=u;\\
\varepsilon /\deg(u)\ &:\text{otherwise}.
\end{cases}
\end{split}
\end{align*}

\begin{definition}(Wasserstein distance)
Let $G(V,E)$ be a graph, $\mu$ and $\nu$ be two probability measures on $G$. The Wasserstein distance $W(\mu,\nu)$ is given by
$$W(\mu,\nu):=\inf_A\sum_{u,v\in V}A(u,v)d(u,v),$$
where $A$ is a coupling between $\mu$ and $\nu$, i.e. a mapping $A:V\times V \to [0,1]$ s.t.
$$\sum_{v\in V}A(u,v)=\mu (u) \text{ and } \sum_{u\in V}A(u,v)=\nu (v).$$
\end{definition}

\begin{definition}
For any vertices $u,v \in G(V,E,w,m)$, the $\varepsilon$-Ollivier-Ricci curvature $\kappa_{\varepsilon}(u,v)$ is defined via
$$\kappa_{\varepsilon}(u,v):=1-\frac{W(\mu_u^{\varepsilon},\mu_v^{\varepsilon})}{d(u,v)}.$$
Specifically, the Ollivier Ricci curvature on a normalized graph is defined via
$$\kappa^O(u,v):=\kappa_1(u,v).$$
The Lin-Lu-Yau-Ollivier Ricci curvature $\kappa(u,v)$ is defined via
$$\kappa(u,v):=\lim_{\varepsilon \to 0^+} \frac{1}{\varepsilon}\kappa_{\varepsilon}(u,v).$$
\end{definition}

For fixed vertices $u,v\in E$, it was shown in \cite{MR3815539} that the function $\varepsilon \mapsto \kappa_{\varepsilon}(u,v)$ is concave and piecewise linear, so the limitation exists and $\kappa$ is well defined. There are also equivalent limit-free definitions via coupling function or Laplacian on graphs; see \cite{bai2020sum,MR3998765}. When we say Ricci curvature or curvature in the following, we always mean the Lin-Lu-Yau-Ollivier Ricci curvature defined above.

For combinatorial graphs, M\"unch and Wojciechowsk \cite[Theorem 2.6]{MR3998765} gave an easier way to calculate Ricci curvatures on edges as shown in the following theorem.
\begin{theorem}
\label{thm-tnc}
(Transport and combinatorial graphs) Let $G=(V,E)$ be a combinatorial graph and let $u \sim v$ be adjacent vertices. Let $B_{uv}:=B_1(u) \cap B_1(v)$, $B_u^v:=B_1(u) \setminus B_1(v)$ and $B_v^u:=B_1(v) \setminus B_1(u)$. Let $\Phi _{uv}:=\{\phi:D(\phi) \subseteq B_u^v \to R(\phi) \subseteq B_v^u\ :\phi \ bijective\}$. For $\phi \in \Phi_{uv}$ write $D(\phi)^c:=B_u^v\setminus D(\phi)$ and $R(\phi)^c:=B_v^u\setminus R(\phi)$. Then
$$\kappa(u,v)=\# B_{uv} - \inf_{\phi \in \Phi_{uv}}\left( \# D(\phi)^c + \#R(\phi)^c + \sum_{w\in D(\phi)}[d(w,\phi (w))-1] \right).$$
\end{theorem}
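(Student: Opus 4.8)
The plan is to read off $\kappa(u,v)$ from the first-order behaviour of $W(\mu_u^\varepsilon,\mu_v^\varepsilon)$ as $\varepsilon\to 0^+$. Since $u\sim v$ we have $d(u,v)=1$, so $\kappa_\varepsilon(u,v)=1-W(\mu_u^\varepsilon,\mu_v^\varepsilon)$ and $\kappa(u,v)=\lim_{\varepsilon\to 0^+}\varepsilon^{-1}\bigl(1-W(\mu_u^\varepsilon,\mu_v^\varepsilon)\bigr)$, the limit existing by the piecewise-linearity of $\varepsilon\mapsto\kappa_\varepsilon$ noted above. For small $\varepsilon$ the measures $\mu_u^\varepsilon$ and $\mu_v^\varepsilon$ are supported on $B_1(u)\cup B_1(v)$, so $W(\mu_u^\varepsilon,\mu_v^\varepsilon)$ is the optimal value of a finite transportation problem, and I would bound it from both sides and match the $O(\varepsilon)$ coefficients.

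For the inequality $\kappa(u,v)\ge \#B_{uv}-\inf_{\phi\in\Phi_{uv}}\bigl(\#D(\phi)^c+\#R(\phi)^c+\sum_{w\in D(\phi)}[d(w,\phi(w))-1]\bigr)$ I would exhibit, for each fixed $\phi\in\Phi_{uv}$, an explicit coupling of $\mu_u^\varepsilon$ and $\mu_v^\varepsilon$: keep mass $\varepsilon$ in place at $u$, at $v$, and at every common neighbour (the vertices of $B_{uv}\setminus\{u,v\}$); ship the mass $\varepsilon$ at each $w\in D(\phi)$ to $\phi(w)$ along a geodesic, at cost $\varepsilon\,d(w,\phi(w))$; ship the mass $\varepsilon$ at each $w\in D(\phi)^c$ to $v$ (note $d(w,v)=2$, since $w\in B_1(u)\setminus B_1(v)$); fill the deficit $\varepsilon$ at each $w'\in R(\phi)^c$ from $u$ (again distance $2$); and move the remaining bulk mass at $u$, which is $\approx 1$, across the edge $uv$. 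Using $\#B_1(u)=\#B_{uv}+\#B_u^v$, one checks this is a genuine coupling for $\varepsilon$ small and that its total cost equals $1-\varepsilon\bigl(\#B_{uv}-\#D(\phi)^c-\#R(\phi)^c-\sum_{w\in D(\phi)}[d(w,\phi(w))-1]\bigr)$; dividing by $\varepsilon$, sending $\varepsilon\to0^+$, and taking the infimum over $\phi$ gives this direction.

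For the reverse inequality I would analyse an optimal coupling $A$ (which exists, the problem being a finite linear program). A triangle-inequality exchange argument first shows we may assume $A$ fixes mass $\varepsilon$ at $u$, at $v$, and at each common neighbour, since any ``round trip'' of mass through such a vertex can be short-circuited without raising the cost. What remains is a coupling between $\approx 1$ at $u$ together with $\varepsilon$ at each vertex of $B_u^v$ and $\approx 1$ at $v$ together with $\varepsilon$ at each vertex of $B_v^u$; from it I would read off $\phi\in\Phi_{uv}$ by setting $\phi(w)=w'$ whenever $A$ sends the mass of $w\in B_u^v$ to $w'\in B_v^u$, a surplus vertex shipped to $v$ (resp.\ a deficit filled from $u$) contributing to $\#D(\phi)^c$ (resp.\ $\#R(\phi)^c$). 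Bounding each distance-$2$ detour below by $2\varepsilon$, each matched move by $\varepsilon\,d(w,\phi(w))$, and the bulk $u\to v$ transfer by its mass, the cost of $A$ is at least the cost of the coupling built above for this same $\phi$, hence at least $1-\varepsilon\bigl(\#B_{uv}-\inf_{\phi}(\cdots)\bigr)$ by the first part.

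The hard part will be justifying that an optimal coupling decomposes this cleanly — ruling out that the $\varepsilon$-masses on $B_u^v$ get split among several targets, that mass takes needlessly long detours, and so forth. Concretely, if a surplus vertex $w$ and a deficit vertex $w'$ were joined by an unused length-$2$ geodesic while $A$ instead routed both through the edge $uv$, one could re-route to strictly lower the cost; excluding all such local improvements — equivalently, producing the certifying $1$-Lipschitz dual potential by complementary slackness — is a genuine, if elementary, case analysis. The full argument, in the more general weighted setting, is in \cite{MR3998765}.
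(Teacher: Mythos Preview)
The paper does not prove this theorem at all: it is quoted verbatim as \cite[Theorem~2.6]{MR3998765} (M\"unch--Wojciechowski) and used as a black box. So there is no ``paper's own proof'' to compare against; your sketch is already more than the paper provides, and you correctly point to the same reference for the full argument.

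As for the sketch itself: the upper-bound direction is fine --- the explicit coupling you describe is exactly the right one, and the cost computation checks out (one verifies that the leftover $u\!\to\!v$ mass balances because $\#B_u^v+\#R(\phi)^c=\#B_v^u+\#D(\phi)^c$, which follows from $\#D(\phi)=\#R(\phi)$). For the reverse direction your instincts are right that the delicate point is extracting a bijection $\phi$ from an optimal coupling when the $\varepsilon$-masses may split fractionally among several targets; the clean way around this is not a case analysis on transport plans but Kantorovich--Rubinstein duality: one exhibits a $1$-Lipschitz function $f$ on $B_1(u)\cup B_1(v)$ with $\nabla_{vu}\Delta f$ equal to the claimed curvature, and shows any such $f$ is dominated by one induced by some $\phi$. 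That is the route taken in \cite{MR3998765}, and it sidesteps the mass-splitting issue you flag as ``the hard part.''
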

\begin{figure}[H]
\centering
\includegraphics[width=0.7\textwidth]{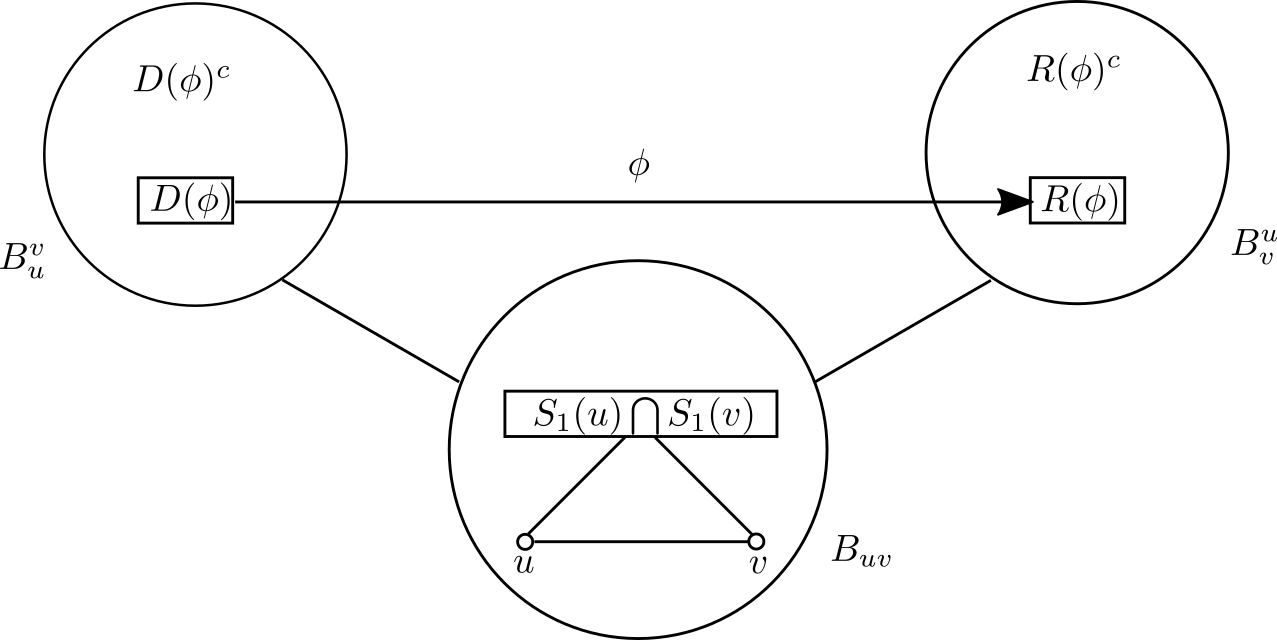}
\caption{}
\end{figure}

We always suppose $\# D(\phi)^c=0$ or $\#R(\phi)^c=0$ when applying Theorem \ref{thm-tnc}. If $\phi$ is an optimal map such that $u\in D(\phi)^c$ and $v \in R(\phi)^c$, we extend $\phi$ by adding an extra map $\phi(u)=v$. It is easy to verify that the new $\phi$ is still an optimal map. So our hypothesis is fair. The following is a direct corollary of Theorem \ref{thm-tnc}.

\begin{corollary}
\label{cor-tnc}
Let $G=(V,E)$ be a combinatorial graph and let $u \sim v$ be adjacent vertices. Let $N_w:=B_1(w) \setminus \{u,v\}$ be the set of extra neighbors of $w$, $w\in \{u,v\}$. If $|N_u|+|N_v|\ge 3$ and $d(N_u,N_v) \ge 3$, we have $\kappa(u,v) < 0$.
\end{corollary}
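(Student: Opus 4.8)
The plan is to apply Theorem \ref{thm-tnc} directly and exhibit a concrete transport plan $\phi\in\Phi_{uv}$ witnessing the strict inequality $\kappa(u,v)<0$. First I would set up the bookkeeping: since $G$ is simple with maximum relevant data coming from the neighborhoods, note that $B_{uv}=B_1(u)\cap B_1(v)$ consists of $u$, $v$, and any common neighbor of $u$ and $v$; the hypothesis $d(N_u,N_v)\ge 3$ forces $N_u\cap N_v=\emptyset$ and in fact no common neighbor (a common neighbor $w$ would lie in both $N_u$ and $N_v$, giving $d(N_u,N_v)=0$), so $\# B_{uv}=2$. Also $B_u^v=N_u$ and $B_v^u=N_v$ up to the vertices $u,v$ themselves, which (after the extension convention $\phi(u)=v$ discussed before the corollary) contribute $0$ to the cost. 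So the curvature formula reduces to
\begin{equation*}
\kappa(u,v)=2-\inf_{\phi\in\Phi_{uv}}\Bigl(\#D(\phi)^c+\#R(\phi)^c+\sum_{w\in D(\phi)}[d(w,\phi(w))-1]\Bigr),
\end{equation*}
where $\phi$ now ranges over bijections between subsets of $N_u$ and $N_v$.

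Next I would bound the infimum from below by $3$, which immediately gives $\kappa(u,v)\le 2-3<0$. Fix any $\phi\in\Phi_{uv}$. Because $d(N_u,N_v)\ge 3$, every matched pair $w\mapsto\phi(w)$ satisfies $d(w,\phi(w))\ge 3$, contributing at least $2$ to the cost, while every unmatched vertex of $N_u$ or $N_v$ contributes $1$ via $\#D(\phi)^c$ or $\#R(\phi)^c$. Writing $k=\#D(\phi)$ for the number of matched pairs, the total cost is at least $2k+(\#N_u-k)+(\#N_v-k)=\#N_u+\#N_v\ge 3$ by hypothesis. Hence $\inf_\phi(\cdots)\ge 3$ and $\kappa(u,v)\le -1<0$.

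I expect essentially no serious obstacle here; the corollary is a short deduction from Theorem \ref{thm-tnc}. The only points requiring a little care are the two reductions in the first paragraph: (i) confirming that the hypothesis $d(N_u,N_v)\ge 3$ really does preclude any common neighbor of $u$ and $v$ (so that $\# B_{uv}=2$ rather than $3$), and (ii) justifying that the vertices $u$ and $v$ can be removed from the cost accounting without changing the value of the infimum, which is exactly the extension-by-$\phi(u)=v$ convention stated just before the corollary. Once those are nailed down, the lower bound $\#N_u+\#N_v$ on the transport cost is the whole content, and the conclusion follows. One should also observe that the argument in fact shows $\kappa(u,v)\le 2-(\#N_u+\#N_v)\le -1$, a slightly stronger statement than the bare $\kappa(u,v)<0$ claimed, but I would keep the statement as is since that is all that is needed downstream.
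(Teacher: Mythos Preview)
Your argument is correct and is exactly the direct computation the paper intends (the paper gives no proof, labeling this a ``direct corollary'' of Theorem~\ref{thm-tnc}). One minor cleanup: since $u,v\in B_{uv}=B_1(u)\cap B_1(v)$, they are already absent from $B_u^v$ and $B_v^u$, so $B_u^v=N_u$ and $B_v^u=N_v$ hold on the nose and no appeal to the extension convention is needed.
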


Let $\kappa(G):=\inf_{\{u,v\}\in E}\kappa(u,v)$ be the Ricci curvature of $G$ and $\deg(G):= \sup_{u \in V}\deg(u)$ the maximum degree of $G$. The class of graphs we will classify in this paper is defined by
$$\mathcal{G}:=\{G(V,E): \deg(G)\le 3,\kappa(G)\ge 0, \diam(G)\ge 6\}.$$

Cushing et al. \cite{cushing2019graph} provide a powerful online tool, the Graph Curvature Calculator, to calculate various of curvatures on graphs. Readers can find it on https://www.mas.ncl.ac.uk/graph-curvature/.  By using this tool, it is convenient to check that the Ricci curvatures of graphs in Figure \ref{fig-main-1}, \ref{fig-main-2} and \ref{fig-main-3} are nonnegative. We also use it to prove the following theorems.

\begin{theorem}
\label{thm-2-2}
Let $G=(V,E)$ be a graph with $\deg(G)\le 3$. Let $G_C$ be the combinatorial graph and $G_N$ be the normalized graph obtained by equipping $G$ with combinatorial weight and normalized weight respectively. Then $\kappa(G_C)\ge 0$ if and only if $\kappa(G_N)\ge 0$.
\end{theorem}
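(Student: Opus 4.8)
The plan is to reduce the statement to a finite, mechanically checkable computation. The point is that for a fixed edge $uv$ in a graph with $\deg(G)\le 3$, the quantity $\kappa(u,v)$ — whether in the combinatorial or the normalized weighting — depends only on the \emph{local} structure of $G$ near $uv$, and in fact only on the two-ball $B_2(u)\cup B_2(v)$ together with the induced distances among vertices in $B_1(u)\cup B_1(v)$. Since $\deg(G)\le 3$, the number of combinatorial possibilities for this local picture is finite: each of $u,v$ has at most one extra neighbour besides the other, each such neighbour has at most two further neighbours, and the only freedom left is which of these outer vertices coincide or are adjacent — equivalently, the pairwise distances $d(x,y)\in\{1,2,3,\dots\}$ among the $\le 4$ vertices of $B_u^v\cup B_v^u$ (distances $\ge 3$ all behave identically by Corollary \ref{cor-tnc} and by the transport formula, since a transported mass contributes $d(w,\phi(w))-1$ and moving mass a distance $\ge 3$ is never better than leaving it uncovered at cost $1$, so we may cap distances at $3$). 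Thus there are only finitely many local isomorphism types to consider.

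First I would isolate, for combinatorial graphs, the formula of Theorem \ref{thm-tnc}, and for normalized graphs the analogous limit-free expression for $\kappa^{LLY}$ on graphs of bounded degree (for normalized weights $\kappa(u,v)$ is again piecewise linear and concave in $\varepsilon$ with the limiting slope computable from an optimal transport plan between $\mu_u^\varepsilon$ and $\mu_v^\varepsilon$; the relevant combinatorics of $B_1(u),B_1(v)$ is the same, only the mass sizes $1/\deg$ differ). The key observation is that in both weightings the sign of $\kappa(u,v)$ is a function of the same finite local data: the pair $(\deg(u),\deg(v))\in\{1,2,3\}^2$, the adjacency pattern and capped pairwise distances of the extra neighbours, and the degrees of those extra neighbours (which only matter through $\Deg$ in the normalized case and enter linearly). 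So the second step is to enumerate all such local configurations — a bounded list, small enough to tabulate by hand or, as the authors indicate, by the Graph Curvature Calculator — and for each one record $\mathrm{sign}\,\kappa(u,v)$ under the combinatorial weighting and under the normalized weighting.

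The third step is simply to check that in this table the sign never disagrees: $\kappa_C(u,v)\ge 0 \iff \kappa_N(u,v)\ge 0$ for every admissible local type. Granting this, the theorem follows: $\kappa(G_C)\ge 0$ means $\kappa_C(u,v)\ge 0$ for every edge, which by the table is equivalent to $\kappa_N(u,v)\ge 0$ for every edge, i.e. $\kappa(G_N)\ge 0$. I expect the main obstacle to be bookkeeping rather than mathematics: one must be careful that the local neighbourhood really does determine $\kappa(u,v)$ in the normalized case as well (a subtlety is that $\Deg(u)$ involves $1/\deg$, so the total transported mass is $\varepsilon\deg(u)/\deg(u)\cdot(\text{stuff})$ — one should verify the optimal coupling only redistributes mass among $B_1(u)\cup B_1(v)$ and that the $\varepsilon\to 0^+$ slope is insensitive to the structure beyond $B_2$), and that distances $\ge 3$ can indeed be capped. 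Once those two facts are nailed down, the enumeration is routine and the sign-agreement can be certified by direct computation; a convenient way to present it is to list the finitely many degree-$\le 3$ local patterns together with $\kappa_C$ and $\kappa_N$ side by side and observe the signs coincide.
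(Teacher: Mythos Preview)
Your approach is correct in principle and would work, but it is considerably more laborious than the paper's argument. The paper exploits a structural shortcut you overlook: if $\deg(u)=\deg(v)=d$, then the combinatorial measure $\mu_u^\varepsilon$ coincides with the normalized measure $\mu_u^{d\varepsilon}$, so $\kappa^C_\varepsilon(u,v)=\kappa^N_{d\varepsilon}(u,v)$ and hence $\kappa_C(u,v)=d\cdot\kappa_N(u,v)$; the signs agree automatically with no enumeration at all. Together with the trivial observation that any edge with an endpoint of degree~$1$ has nonnegative curvature in both weightings, this leaves only the single mixed case $(\deg(u),\deg(v))=(2,3)$. In that case $u$ has one extra neighbour $u'$ and $v$ has two, $v',v''$, and both $\kappa_C(uv)$ and $\kappa_N(uv)$ are functions of just the two distances $d(u',v')$, $d(u',v'')$ (each capped at $3$, as you correctly argue). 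A direct computation then shows $\kappa_C(uv)<0\iff d(u',v')=d(u',v'')=3\iff\kappa_N(uv)<0$.

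Two minor corrections to your write-up: (i) with $\deg\le 3$ each of $u,v$ has up to \emph{two} extra neighbours, not one (your later ``$\le 4$'' is right, the earlier sentence is a slip); (ii) the degrees of the extra neighbours do \emph{not} enter the normalized curvature either, since $\mu_u^\varepsilon$ depends only on $\deg(u)$ --- so including them in your enumeration only adds redundant cases. Your locality and distance-capping arguments are sound; the paper simply avoids the bulk of the case analysis via the equal-degree scaling identity.
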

\begin{proof}
For any $uv \in E$, let $\kappa_C(uv)$ be the curvature of $uv$ when $G$ is equipped with combinatorial weight and $\kappa_N(uv)$ be the curvature of $uv$ when $G$ is equipped with normalized weight. If $\deg(u)=1$ or $\deg(v)=1$, we have $\kappa_C(uv)\ge 0$ and $\kappa_N(uv)\ge 0$ by a direct calculation. If $\deg(u)=\deg(v)$, we have $\kappa_C(uv)=\deg(u) \cdot \kappa_N(uv)$ by the definition of Ricci curvature. So we only need to consider the case that $\deg(u)=2$ and $\deg(v)=3$, i.e. $u$ has an extra neighbor $u'$, $v$ has two extra neighbors $v'$ and $v''$. Then $\kappa_C(uv)$ and $\kappa_N(uv)$ depend only on the distances $d(u',v')$ and $d(u',v'')$. By a direct calculation, we have $\kappa_C(uv) < 0$ $\Leftrightarrow$ $d(u',v')=d(u',v'')=3$ $\Leftrightarrow$ $\kappa_N(uv) < 0$. This completes the proof.
\end{proof}

It is noteworthy that Theorem \ref{thm-2-2} does not hold for graphs with maximum degree larger than 3. The minimum Ricci curvatures of the following graphs are 0, 0.5, -0.133 as normalized graphs and -1, -1, 0 as combinatorial graphs respectively. \footnote{Graph (a) built by Bai-Lu-Yau \cite{bai2021ricci} is Ricci-flat as a normalized graph. Graphs (b) and (c) are pointed out to us by Florentin M\"unch.}
\begin{figure}[H]
\centering
\subfigure[]{
\includegraphics[width=0.2\textwidth]{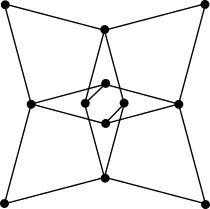}}
\qquad
\subfigure[]{
\includegraphics[width=0.2\textwidth]{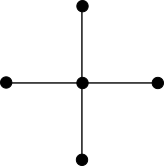}}
\qquad
\subfigure[]{
\includegraphics[width=0.2\textwidth]{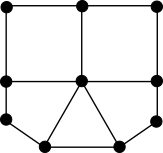}}
\caption{}
\end{figure}

\begin{theorem}
\label{thm-main-3}
Let $G$ be a normalized graph with nonnegative Ollivier Ricci curvature, maximum degree at most 3 and diameter at least 6. If $G$ is finite, then it is isometric to a finite path, a simple cycle, a prism graph, a M\"obius ladder or a quasi-ladder as shown in the following figure. If $G$ is infinite, then it is isometric to (a), (b), (c), (d), (e), (f), (g) or (h) of Figure \ref{fig-main-3}.
\end{theorem}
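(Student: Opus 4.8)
\textbf{Proof proposal for Theorem \ref{thm-main-3}.}

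The plan is to reduce the Ollivier-curvature classification to the Lin-Lu-Yau classification already obtained in Theorems \ref{thm-main-finite} and \ref{thm-main-infty}, by exploiting the sign-comparison between $\kappa^O$ and $\kappa$ on graphs of maximum degree at most $3$. Recall that for normalized graphs the map $\varepsilon \mapsto \kappa_\varepsilon(u,v)$ is concave and piecewise linear with $\kappa_0 = 0$ and $\kappa(u,v) = \lim_{\varepsilon\to 0^+}\kappa_\varepsilon(u,v)/\varepsilon$, so $\kappa^O(u,v)=\kappa_1(u,v)$ and $\kappa(u,v)$ have closely related signs. First I would record the elementary fact that $\kappa^O(u,v) \le \kappa(u,v)$ for any edge $uv$ in a normalized graph: concavity together with $\kappa_0=0$ gives $\kappa_1(u,v) \le \kappa'_{0^+}(u,v)\cdot 1 = \kappa(u,v)$. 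Consequently $\kappa^O(G)\ge 0 \implies \kappa(G)\ge 0$, so every normalized graph $G$ with $\deg(G)\le 3$, $\diam(G)\ge 6$ and $\kappa^O(G)\ge 0$ lies in (the normalized analog of) $\mathcal G$; by Theorem \ref{thm-2-2} and Theorem \ref{thm-main-finite}/\ref{thm-main-infty} it is therefore isomorphic to one of the graphs in Figures \ref{fig-main-1}, \ref{fig-main-2}, \ref{fig-main-3}.

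It then remains to decide \emph{which} of those finitely many families of graphs actually have $\kappa^O(G)\ge 0$. For each edge type occurring in a path, cycle, prism, M\"obius ladder, or (finite/infinite) quasi-ladder, the local neighbourhood $B_1(u)\cup B_1(v)$ belongs to a short explicit list (since all degrees are $\le 3$), and on each such local configuration $\kappa^O(u,v)=\kappa_1(u,v)$ can be computed directly from the Wasserstein-distance definition, or simply read off from the Graph Curvature Calculator of Cushing et al. The key step is to carry out this finite case-check: paths, cycles, prism graphs and M\"obius ladders all have nonnegative Ollivier curvature on every edge, whereas for the quasi-ladder one must verify edge by edge that the ``transition'' edges (where the extra-neighbour pattern changes, i.e. where the state function $s$ switches among $2,3^-,3^0,3^+$) still carry $\kappa^O \ge 0$; the graphs in Figure \ref{fig-main-1}(e) and in items (i),(j) of Figure \ref{fig-main-3} are precisely the ones to scrutinize, since these are the configurations most likely to drop below zero under the more restrictive $\varepsilon=1$ evaluation. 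Whichever of them fails is excluded from the statement (this is why Figure \ref{fig-main-1}(e) and Figure \ref{fig-main-2}, and items (i),(j) of Figure \ref{fig-main-3}, do not appear in the conclusion of Theorem \ref{thm-main-3}).

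I expect the main obstacle to be bookkeeping rather than conceptual: one must enumerate all edge-orbits of the quasi-ladder families (including the various infinite quasi-ladders, which differ only in their finitely many ``defect'' rungs) and evaluate $\kappa_1$ on each, checking that the remaining families listed in the theorem really are Ollivier-nonnegative on \emph{every} edge while the omitted ones are not. Using Theorem \ref{thm-tnc} is not directly available for $\kappa^O$ (that formula is for combinatorial $\kappa$), so for the nonnegativity direction one either computes $W(\mu^1_u,\mu^1_v)$ by exhibiting an explicit optimal coupling on each $3$-bounded local picture, or invokes the curvature calculator; for the exclusion direction it suffices to exhibit a single edge in Figure \ref{fig-main-1}(e), in the quasi-ladder of Figure \ref{fig-main-2}, and in items (i),(j) of Figure \ref{fig-main-3} where $\kappa^O<0$. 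Since all these verifications are on graphs of bounded degree and bounded local complexity, they are routine and finite, and assembling them yields the stated classification.
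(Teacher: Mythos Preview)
Your approach is essentially the same as the paper's: reduce via concavity of $\varepsilon\mapsto\kappa_\varepsilon$ (so $\kappa^O\ge 0\Rightarrow\kappa\ge 0$ on normalized graphs) to the Lin--Lu--Yau classification of Theorems~\ref{thm-main-finite} and~\ref{thm-main-infty}, then run a finite curvature check on the resulting list using the Graph Curvature Calculator. One small clarification: the quasi-ladder family of Figure~\ref{fig-main-2} is not discarded wholesale---a restricted subfamily of quasi-ladders (the one pictured immediately after the statement of Theorem~\ref{thm-main-3}) does survive the $\kappa^O\ge 0$ check, so your case analysis should distinguish which end-configurations of the quasi-ladder persist rather than excluding all of them.
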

\begin{figure}[H]
\centering
\includegraphics[width=0.5\textwidth]{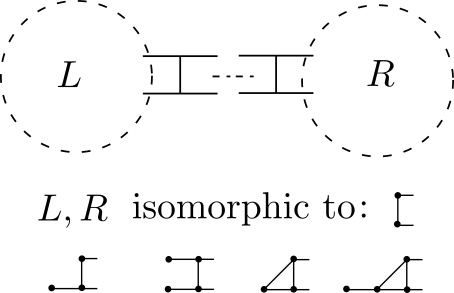}
\caption{Quasi-ladder}
\end{figure}
\begin{proof}
For normalized graphs, $\kappa_0(u,v)\equiv 0$ for all $u,v \in V$. If $\kappa^O(u,v)\ge 0$, we always have $\kappa(u,v)\ge 0$, as the function $\varepsilon \mapsto \kappa_{\varepsilon}(u,v)$ is concave and piecewise linear; see \cite[Theorem 1.1]{MR3815539}. This indicates that all normalized graphs with nonnegative Ollivier Ricci curvature are also graphs with nonnegative Lin-Lu-Yau-Ollivier Ricci curvature. We check the Ollivier Ricci curvatures of graphs in Figure \ref{fig-main-1}, \ref{fig-main-2}, \ref{fig-main-3} by using the Graph Curvature Calculator and get the result.
\end{proof}



\section{The local structures}
In this section, we study the local structure on a geodesic path of a graph $G \in \mathcal{G}$.

\begin{lemma}
\label{lem-3-1}
Let $P=v_0 \cdots v_iv_{i+1}v_{i+2}v_{i+3} \cdots v_l$ be a geodesic path of $G\in \mathcal{G}$.

Case $(2,2)$: If $(s(v_{i+1}),s(v_{i+2}))=(2,2)$, then $G$ contains the following as a subgraph.
\begin{figure}[H]
\centering
\includegraphics[width=0.23\textwidth]{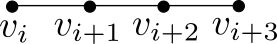}
\caption{Case $(2,2)$}
\label{fig-ls22}
\end{figure}

Case $(2,3^-)$: If $(s(v_{i+1}),s(v_{i+2}))=(2,3^-)$, then $G$ contains one of the followings as a subgraph.
\begin{figure}[H]
\centering
\subfigure[]{
\includegraphics[width=0.23\textwidth]{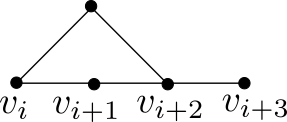}} \quad
 \
\subfigure[]{
\includegraphics[width=0.23\textwidth]{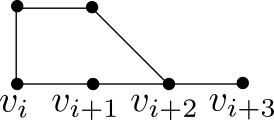}} \quad
 \
\subfigure[]{
\includegraphics[width=0.23\textwidth]{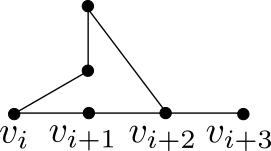}}
\caption{Case $(2,3^-)$}
\label{fig-ls23-}
\end{figure}

Case $(2,3^0)$: If $(s(v_{i+1}),s(v_{i+2}))=(2,3^0)$, then $G$ contains the following as a subgraph.
\begin{figure}[H]
\centering
\includegraphics[width=0.23\textwidth]{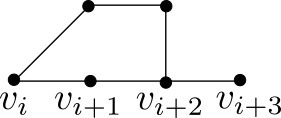}
\caption{Case $(2,3^0)$}
\label{fig-ls230}
\end{figure}

Case $(2,3^+)$: The situation $(s(v_{i+1}),s(v_{i+2}))=(2,3^+)$ can not exist.

Case $(3^-,2)$: The situation $(s(v_{i+1}),s(v_{i+2}))=(3^-,2)$ can not exist.

Case $(3^0,2)$: If $(s(v_{i+1}),s(v_{i+2}))=(3^0,2)$, then $G$ contains the following as a subgraph.
\begin{figure}[H]
\centering
\includegraphics[width=0.23\textwidth]{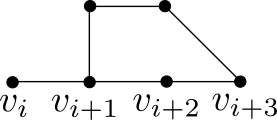}
\caption{Case $(3^0,2)$}
\label{fig-ls302}
\end{figure}

Case $(3^+,2)$: If $(s(v_{i+1}),s(v_{i+2}))=(3^+,2)$, then $G$ contains one of the followings as a subgraph.
\begin{figure}[H]
\centering
\subfigure[]{
\includegraphics[width=0.23\textwidth]{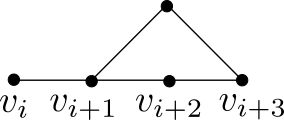}} \quad
 \
\subfigure[]{
\includegraphics[width=0.23\textwidth]{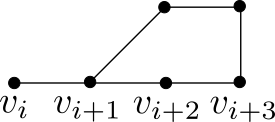}} \quad
 \
\subfigure[]{
\includegraphics[width=0.23\textwidth]{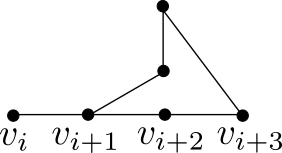}}
\caption{Case $(3^+,2)$}
\label{fig-ls3+2}
\end{figure}

Case $(3^-,3^-)$: If $(s(v_{i+1}),s(v_{i+2}))=(3^-,3^-)$, then $G$ contains one of the followings as a subgraph.
\begin{figure}[H]
\centering
\subfigure[]{
\includegraphics[width=0.23\textwidth]{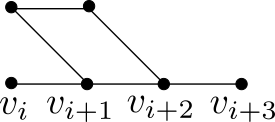}} \quad
\qquad
\subfigure[]{
\includegraphics[width=0.23\textwidth]{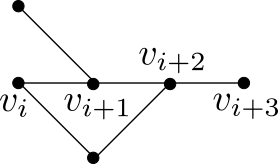}}
\caption{Case $(3^-,3^-)$}
\label{fig-ls3-3-}
\end{figure}

Case $(3^-,3^0)$: The situation $(s(v_{i+1}),s(v_{i+2}))=(3^-,3^0)$ can not exist.

Case $(3^-,3^+)$: The situation $(s(v_{i+1}),s(v_{i+2}))=(3^-,3^+)$ can not exist.

Case $(3^0,3^-)$: If $(s(v_{i+1}),s(v_{i+2}))=(3^0,3^-)$, then $G$ contains one of the followings as a subgraph.
\begin{figure}[H]
\centering
\subfigure[]{
\includegraphics[width=0.23\textwidth]{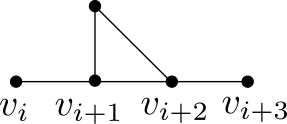}} \ \
\subfigure[]{
\includegraphics[width=0.23\textwidth]{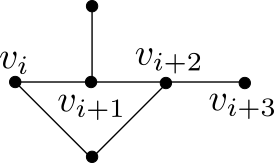}} \ \
\subfigure[]{
\includegraphics[width=0.23\textwidth]{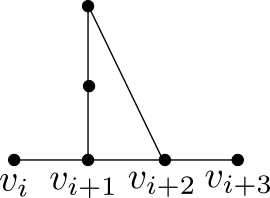}} \ \
\subfigure[]{
\includegraphics[width=0.23\textwidth]{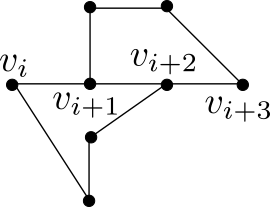}} \ \
\subfigure[]{
\includegraphics[width=0.23\textwidth]{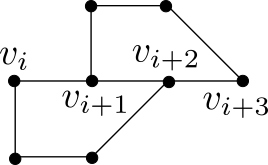}}
\caption{Case $(3^0,3^-)$}
\label{fig-ls303-}
\end{figure}

Case $(3^0,3^0)$: If $(s(v_{i+1}),s(v_{i+2}))=(3^0,3^0)$, then $G$ contains one of the followings as a subgraph.
\begin{figure}[H]
\centering
\subfigure[]{
\includegraphics[width=0.23\textwidth]{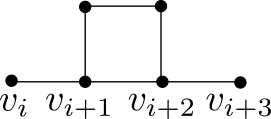}} \quad
\qquad
\subfigure[]{
\includegraphics[width=0.23\textwidth]{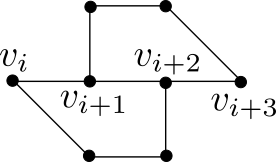}}
\caption{Case $(3^0,3^0)$}
\label{fig-ls3030}
\end{figure}

Case $(3^0,3^+)$: The situation $(s(v_{i+1}),s(v_{i+2}))=(3^0,3^+)$ can not exist.

Case $(3^+,3^-)$: If $(s(v_{i+1}),s(v_{i+2}))=(3^+,3^-)$, then $G$ contains one of the followings as a subgraph.
\begin{figure}[H]
\centering
\subfigure[]{
\includegraphics[width=0.23\textwidth]{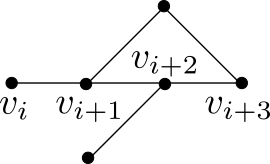}} \ \
\subfigure[]{
\includegraphics[width=0.23\textwidth]{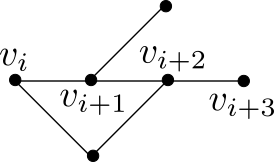}} \ \
\subfigure[]{
\includegraphics[width=0.23\textwidth]{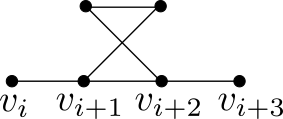}} \ \
\subfigure[]{
\includegraphics[width=0.23\textwidth]{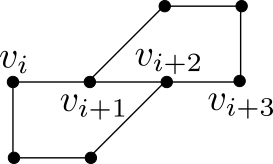}} \ \
\subfigure[]{
\includegraphics[width=0.23\textwidth]{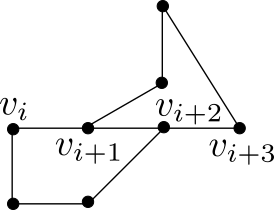}} \ \
\subfigure[]{
\includegraphics[width=0.23\textwidth]{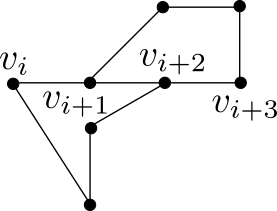}} \ \
\subfigure[]{
\includegraphics[width=0.23\textwidth]{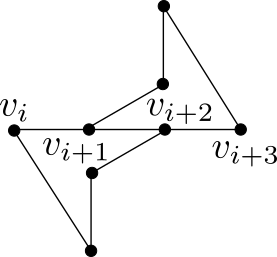}}
\caption{Case $(3^+,3^-)$}
\label{fig-ls3+3-}
\end{figure}

Case $(3^+,3^0)$: If $(s(v_{i+1}),s(v_{i+2}))=(3^+,3^0)$, then $G$ contains one of the followings as a subgraph.
\begin{figure}[H]
\centering
\subfigure[]{
\includegraphics[width=0.23\textwidth]{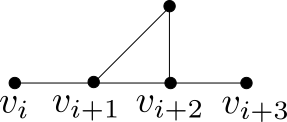}} \ \
\subfigure[]{
\includegraphics[width=0.23\textwidth]{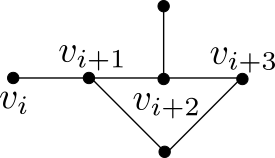}} \ \
\subfigure[]{
\includegraphics[width=0.23\textwidth]{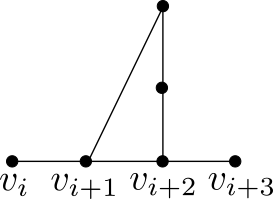}} \ \
\subfigure[]{
\includegraphics[width=0.23\textwidth]{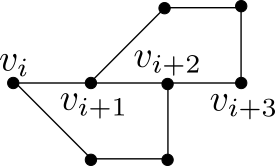}} \ \
\subfigure[]{
\includegraphics[width=0.23\textwidth]{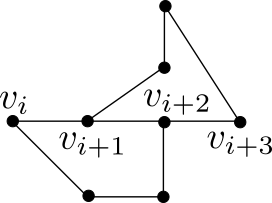}}
\caption{Case $(3^+,3^0)$}
\label{fig-ls3+30}
\end{figure}

Case $(3^+,3^+)$: If $(s(v_{i+1}),s(v_{i+2}))=(3^+,3^+)$, then $G$ contains one of the followings as a subgraph.
\begin{figure}[H]
\centering
\subfigure[]{
\includegraphics[width=0.23\textwidth]{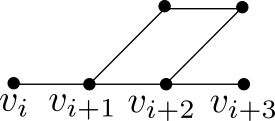}} \quad
\qquad
\subfigure[]{
\includegraphics[width=0.23\textwidth]{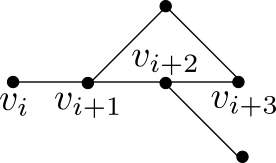}}
\caption{Case $(3^+,3^+)$}
\label{fig-ls3+3+}
\end{figure}
\end{lemma}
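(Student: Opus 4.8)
The plan is to work through the sixteen ordered pairs $\bigl(s(v_{i+1}),s(v_{i+2})\bigr)$ one at a time, applying the curvature formula of Theorem~\ref{thm-tnc} (and its Corollary~\ref{cor-tnc}) chiefly to the edge $e=v_{i+1}v_{i+2}$. Throughout write $u$ and $w$ for the extra neighbours $u_{i+1}$ of $v_{i+1}$ and $u_{i+2}$ of $v_{i+2}$, when these exist. Two families of constraints come for free. Since $P$ is geodesic, $d(v_j,v_k)=|j-k|$, no $u_j$ lies on $P$, and $d(v_j,x)\ge|r(x)-j|$ for all $x$; reading off the superscripts, $r(u)\in\{i,i+1,i+2\}$ according as $s(v_{i+1})\in\{3^-,3^0,3^+\}$ and $r(w)\in\{i+1,i+2,i+3\}$ according as $s(v_{i+2})\in\{3^-,3^0,3^+\}$, which forces lower bounds on all pairwise distances among $v_i,v_{i+3},u,w$. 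And $\deg(v_{i+1}),\deg(v_{i+2})\le3$ force $N_{v_{i+1}}:=B_1(v_{i+1})\setminus\{v_{i+1},v_{i+2}\}$ and $N_{v_{i+2}}$ to be $\{v_i\}$ or $\{v_i,u\}$ and $\{v_{i+3}\}$ or $\{v_{i+3},w\}$; in particular $u\not\sim v_{i+2}$ and $w\not\sim v_{i+1}$ unless $u=w$, and $u=w$ is possible only when $r(u)=r(w)$, i.e.\ only in cases $(3^0,3^-)$ and $(3^+,3^0)$.

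For the five ``cannot exist'' entries the plan is to show $\kappa(v_{i+1},v_{i+2})<0$ outright. In $(2,3^+)$, $(3^-,2)$ and $(3^-,3^+)$ the $r$-value bounds give $|N_{v_{i+1}}|+|N_{v_{i+2}}|\ge3$ and $d(N_{v_{i+1}},N_{v_{i+2}})\ge3$ (for instance $d(v_i,v_{i+3})=3$ and $d(v_i,w)\ge r(w)-i=3$ in case $(2,3^+)$), so Corollary~\ref{cor-tnc} finishes them. In the other two, $(3^-,3^0)$ and $(3^0,3^+)$, the crude distance bound only gives $\ge2$, so instead I evaluate $\inf_{\phi\in\Phi_e}(\cdots)$ from Theorem~\ref{thm-tnc} directly: here $\#B_e=2$ (no common neighbour), $B_{v_{i+1}}^{v_{i+2}}=\{v_i,u\}$ and $B_{v_{i+2}}^{v_{i+1}}=\{v_{i+3},w\}$, and the $r$-value bounds force every coupling $\phi$ to cost at least $3$, whence $\kappa(v_{i+1},v_{i+2})=2-\inf_\phi(\cdots)<0$, a contradiction.

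For the eleven possible cases the recipe is uniform: enumerate the finitely many legal local pictures --- which of $v_i,v_{i+3},u,w$ coincide, which pairs of them are adjacent, and, for any distance-$2$ gap between two of them, on which of the (at most three) admissible intermediate levels the connecting vertex sits --- and for each picture test $\kappa(e)\ge0$ via Theorem~\ref{thm-tnc}, keeping the admissible pictures and discarding the rest. The short lists are immediate: $(2,2)$, $(2,3^0)$, $(3^0,2)$ are forced outright; in $(3^-,3^-)$ the coupling computation gives $\kappa(e)\ge0\iff u\sim w$ or $v_i\sim w$, the two pictures of Figure~\ref{fig-ls3-3-}; in $(3^+,3^+)$, $\kappa(e)\ge0\iff u\sim w$ or $u\sim v_{i+3}$, Figure~\ref{fig-ls3+3+}; in $(3^0,3^0)$, $\kappa(e)\ge0\iff u\sim w$ or $d(v_i,w)=d(u,v_{i+3})=2$, Figure~\ref{fig-ls3030}. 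The longer lists --- three pictures for $(2,3^-)$ and $(3^+,2)$, five for $(3^0,3^-)$ and $(3^+,3^0)$, seven for $(3^+,3^-)$ --- emerge from the same analysis with more branches (the coincidence $u=w$; the choice of level for each connecting vertex; and, in a few branches, an auxiliary edge whose presence is itself forced by imposing $\kappa\ge0$ on a neighbouring edge such as $v_iv_{i+1}$, $v_{i+2}v_{i+3}$, or an edge at $u$ or $w$).

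The genuine difficulty is bookkeeping, not any individual estimate: the enumeration has to be exhaustive, the displayed pictures must be understood as ``induced on the shown vertices'' so that no additional edge among them is tacitly allowed, and in the branchier cases the curvature inequality on $e$ becomes decisive only in combination with the inequality on a neighbouring edge, so those branches must be resolved as a small coupled system rather than inequality by inequality. As a sanity check one may note that the case list is invariant under reversing $P$ (which exchanges $3^+\leftrightarrow3^-$ and fixes $2$ and $3^0$), consistent with paired cases such as $(3^0,3^-)$ and $(3^+,3^0)$ producing equally long figure lists.
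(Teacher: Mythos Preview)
Your approach is correct and essentially the same as the paper's: compute $\kappa(v_{i+1},v_{i+2})$ via Theorem~\ref{thm-tnc} (falling back on Corollary~\ref{cor-tnc} where it already suffices) and enumerate the finitely many local configurations compatible with $\kappa\ge0$; the paper carries out only the case $(2,3^-)$ in detail and declares the remaining fifteen ``similar.'' One small over-complication on your side: the paper obtains all of Lemma~\ref{lem-3-1} from the single inequality $\kappa(v_{i+1},v_{i+2})\ge 0$ together with the geodesic constraints on $P$ --- the further pruning via curvature on neighbouring edges that you anticipate is not used here but is deferred to Lemmas~\ref{lem-3-3} and~\ref{lem-3-2}, so your ``auxiliary edge'' branches are unnecessary at this stage (and the figures are asserted only as subgraphs, not induced subgraphs).
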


\begin{proof}
We only show the proof of case $(2,3^-)$, proofs for other cases are similar. Without loss of generality, let $i=0$. Then $G$ has a subgraph as shown in the following figure with $\deg(v_1)=2$.
\begin{figure}[H]
\centering
\includegraphics[width=0.2\textwidth]{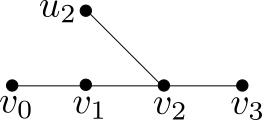}
\caption{}
\end{figure}
By Theorem \ref{thm-tnc} we have
$$\kappa(v_1,v_2)=\# B_{v_1v_2} - \inf_{\phi \in \Phi_{v_1v_2}}(\# D(\phi)^c + \#R(\phi)^c + \sum_{w\in D(\phi)}[d(w,\phi (w))-1]),$$
where $B_{v_1v_2}=\{v_1,v_2\}$, $\Phi_{v_1v_2}=\{ \text{all maps from $\{v_0\}$ to $\{v_3,u_2\}$}\}$, $\# D(\phi)^c=0$, and $\#R(\phi)^c=1$. Then we have
\begin{equation*}
\begin{split}
0 &\le\kappa(v_1,v_2)\\
&=2-\inf_{\phi(v_0)\in \{v_3,u_2\}}(0+1+d(v_0,\phi(v_0))-1)\\
&=\max\{2-d(v_0,v_3),2-d(v_0,u_2)\}.
\end{split}
\end{equation*}
As $d(v_0,v_3)=3$, we have $d(v_0,u_2)=1$ or $d(v_0,u_2)=2$. If $d(v_0,u_2)=1$, $G$ contain (a) of Figure \ref{fig-ls23-} as a subgraph. If $d(v_0,u_2)=2$, $G$ contains (b) or (c) of Figure \ref{fig-ls23-} as a subgraph. This completes the proof.
\end{proof}

By applying case $(\cdot,3^+)$ or case $(3^-,\cdot)$ of Lemma \ref{lem-3-1} repeatedly, we get the following corollary.
\begin{corollary}
\label{cor-3-1}
Let $P=v_0v_1 \cdots v_l$ be a geodesic path in $G\in \mathcal{G}$, $l\ge 4$.

1) If there exists a $ i\in\{1,2,\cdots,l-1\}$ such that $s(v_i)=3^+$, then $s(v_j)=3^+$ for all $1\le j \le i$ and $G$ has a subgraph (b) or (c) of the following figure.

2) If there exists a $i\in\{2,\cdots,l-1\}$ such that $s(v_i)=3^-$, then $s(v_j)=3^-$ for all $i\le j \le l-1$ and $G$ has a subgraph (e) or (f) of the following figure.
\end{corollary}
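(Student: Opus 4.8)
The plan is to derive the corollary from Lemma \ref{lem-3-1} in two moves: first a monotone propagation of the state function along $P$, and then a reconstruction of the subgraph by chaining together the surviving local pictures.

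For the propagation I would use only the ``can not exist'' clauses of Lemma \ref{lem-3-1}. In part 1 these say that none of the ordered pairs $(2,3^+)$, $(3^-,3^+)$, $(3^0,3^+)$ can occur as $(s(v_j),s(v_{j+1}))$, so $s(v_{j+1})=3^+$ already forces $s(v_j)=3^+$. Starting from the given index $i$ with $s(v_i)=3^+$ and iterating this implication downwards through $j=i-1,i-2,\dots,1$ gives $s(v_j)=3^+$ for all $1\le j\le i$; every pair invoked here sits at positions $j,j+1$ with $j\le i-1\le l-2$, so its whole neighbourhood lies on $P$ and Lemma \ref{lem-3-1} genuinely applies. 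Part 2 is the mirror image: Lemma \ref{lem-3-1} forbids $(3^-,2)$, $(3^-,3^0)$, $(3^-,3^+)$, so $s(v_j)=3^-$ forces $s(v_{j+1})=3^-$, and iterating upwards from the given $i$ yields $s(v_j)=3^-$ for all $i\le j\le l-1$. (The restriction $i\ge 2$ in part 2 is necessary, since $s(v_1)=3^-$ would force the extra neighbour of $v_1$ to have $r=0$, i.e.\ to coincide with the root $v_0$.)

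For the subgraph, suppose we are in part 1, so that $s(v_1)=\dots=s(v_i)=3^+$ and each $v_j$, $1\le j\le i$, carries a unique extra neighbour $u_j$ with $r(u_j)=j+1$. Every consecutive pair along $v_1\cdots v_i$ has first coordinate $3^+$, so I would apply Case $(3^+,3^+)$ of Lemma \ref{lem-3-1} to $(v_j,v_{j+1})$ for $1\le j\le i-1$, together with the applicable one of Cases $(3^+,2)$, $(3^+,3^0)$, $(3^+,3^-)$ at the terminal pair $(v_i,v_{i+1})$ when $i\le l-1$. Each of these pictures already fixes the local arrangement of $v_{j-1},v_j,v_{j+1},u_j,u_{j+1}$ around the corresponding rung (in particular the $j=1$ application fixes the arrangement of $v_0,v_1,u_1$); consecutive applications overlap in the data at $v_j,v_{j+1},u_{j+1}$, hence glue, and the resulting subgraph on $\{v_0,\dots,v_i\}\cup\{u_1,\dots,u_i\}$ is forced to be one of the two patterns (b), (c) in the figure attached to the corollary. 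The other patterns in that figure involve only degree-$2$ vertices along $P$ and are therefore incompatible with $s(v_i)=3^+$, leaving exactly (b) or (c). Part 2 then follows from the reflection $v_j\mapsto v_{l-j}$ of $P$, which interchanges $3^+$ with $3^-$, the $3^+$-cases of Lemma \ref{lem-3-1} with its $3^-$-cases, and (b),(c) with (e),(f).

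The step I expect to be the real obstacle is the gluing in the previous paragraph: one must check that the binary choice offered by Figure \ref{fig-ls3+3+} at each rung is rigid, i.e.\ that two neighbouring applications of Case $(3^+,3^+)$ cannot select opposite options on the vertices they share, so that no hybrid configuration survives and precisely one of (b), (c) occurs throughout. I would also dispose of the extreme cases $i=1$ and $i=l-1$ separately, where the chain of $(3^+,3^+)$ applications is empty or a single link and the structure must be extracted directly from the one applicable case of Lemma \ref{lem-3-1}. By contrast, the propagation step is routine bookkeeping once the list of impossible ordered pairs has been assembled.
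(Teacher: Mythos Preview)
Your proposal is correct and matches the paper's approach exactly; the paper's own proof is the single sentence ``By applying case $(\cdot,3^+)$ or case $(3^-,\cdot)$ of Lemma~\ref{lem-3-1} repeatedly, we get the following corollary.'' Your worry about the gluing is easy to settle: once the propagation gives $s(v_1)=\cdots=s(v_i)=3^+$, the second option in Case $(3^+,3^+)$ applied at an interior pair $(v_j,v_{j+1})$ with $j+2\le i$ would make $u_j$ the extra neighbour of $v_{j+2}$ and hence force $s(v_{j+2})=3^-$, contradicting the already established $s(v_{j+2})=3^+$; so the binary choice is rigid at every interior rung and only the terminal rung produces the dichotomy (b)/(c).
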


\begin{figure}[H]
\centering
\subfigure[]{
\includegraphics[width=0.28\textwidth]{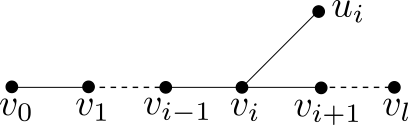}} \ \
\subfigure[]{
\includegraphics[width=0.28\textwidth]{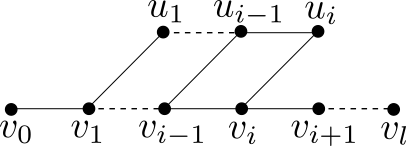}} \ \
\subfigure[]{
\includegraphics[width=0.28\textwidth]{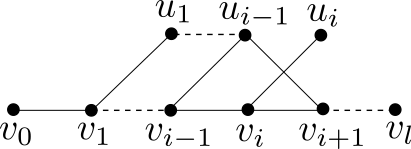}} \ \
\subfigure[]{
\includegraphics[width=0.28\textwidth]{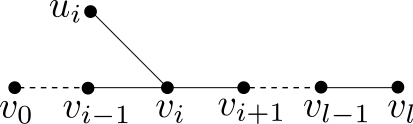}} \ \
\subfigure[]{
\includegraphics[width=0.28\textwidth]{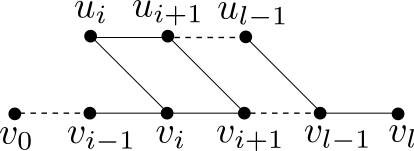}} \ \
\subfigure[]{
\includegraphics[width=0.28\textwidth]{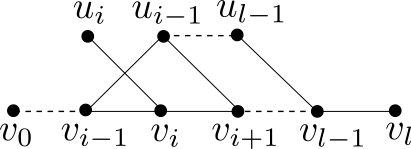}} \ \
\caption{}
\end{figure}

As a supplement to Lemma \ref{lem-3-1}, we explore the local structures on long geodesic paths in the following two lemmas,

\begin{lemma}
\label{lem-3-3}
Let $P=v_0 \cdots v_iv_{i+1}v_{i+2}v_{i+3} \cdots v_l$ be a geodesic path with  $l\ge6$. If $G$ contains (b) of Figure \ref{fig-ls3030} as a subgraph, then $G$ is isometric to the particular graph (e) of Figure \ref{fig-main-1}.
\end{lemma}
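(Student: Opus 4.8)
The plan is to treat the given picture --- subgraph (b) of Figure~\ref{fig-ls3030} --- as a rigid seed sitting inside the geodesic path $P$, and to grow it outward one vertex at a time; at each step the possibilities allowed by Lemma~\ref{lem-3-1} are almost all excluded by the structure already present, so after boundedly many steps the endpoints $v_0$ and $v_l$ are reached and $G$ is forced to coincide with the explicit finite graph (e) of Figure~\ref{fig-main-1}.

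First I would normalise indices so that $(s(v_{i+1}),s(v_{i+2}))=(3^0,3^0)$ and read off from picture (b) every edge among $v_i,v_{i+1},v_{i+2},v_{i+3}$ and the extra neighbours of $v_{i+1},v_{i+2}$; in particular picture (b) determines $s(v_i)$ and $s(v_{i+3})$, which come out to be $3^+$ and $3^-$ (here I assume $1\le i\le l-4$; if $i$ or $l-i$ is smaller the seed already abuts an endpoint of $P$ and a short direct check gives the conclusion). Corollary~\ref{cor-3-1} then yields $s(v_j)=3^+$ for $1\le j\le i$, $s(v_j)=3^-$ for $i+3\le j\le l-1$, together with the global shape of the ``$3^+$-part'' and the ``$3^-$-part''. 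What remains is to nail down the few edges near the $(3^0,3^0)$ junction and near $v_0$ and $v_l$.

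The heart of the argument is an outward induction along $P$. Going from the seed toward $v_0$ one meets successively the cases $(3^+,3^0)$ and then $(3^+,3^+)$ of Lemma~\ref{lem-3-1}, each offering only one or two sub-pictures; the distances already determined eliminate all but one option, either by a one-line shortcut estimate or, whenever a new extra neighbour would be required, by Corollary~\ref{cor-tnc} (which rules it out once $\deg(G)\le3$ and $\kappa(G)\ge0$ are used). The symmetric induction runs from the seed toward $v_l$ via the cases $(3^0,3^-)$ and $(3^-,3^-)$. The outcome is that the extra neighbours of the $v_j$'s assemble into path(s) parallel to $P$ with no further branching and no premature reconnection.

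Finally I would exploit that $P$ genuinely terminates: $v_0$ has no vertex before it and $\deg(v_0)\le3$, and symmetrically at $v_l$. Feeding this into the induction forces the parallel path(s) to fold back onto $v_0$ and $v_l$, which fixes $l$ at a single value; $G$ is then a completely explicit finite graph, and a direct comparison identifies it with graph (e) of Figure~\ref{fig-main-1}. Because $P$ is a diameter path, the degree bound leaves no room for any vertex outside the reconstructed piece, and the inclusion is manifestly distance-preserving, so $G$ is isometric to graph (e). The main obstacle is the inductive step: bookkeeping, as the picture grows, exactly which distances are already pinned down, so that the ``wrong'' sub-picture of Lemma~\ref{lem-3-1} is provably excluded at every stage; this is also precisely where $l\ge6$ is needed, since one must have enough room on both sides of the $(3^0,3^0)$ block for the induction to reach both endpoints and thereby determine $G$ uniquely.
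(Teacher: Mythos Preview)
Your plan is essentially the paper's own argument: read off $s(v_i)=3^+$ and $s(v_{i+3})=3^-$ from picture~(b), propagate with Corollary~\ref{cor-3-1}, and then pin down the remaining edges by repeatedly invoking Lemma~\ref{lem-3-1} and Corollary~\ref{cor-tnc}; the paper organises this as ``$i=2$ works, $i\neq 2$ leads to a contradiction'', which is equivalent to your ``the induction fixes $l$''. Two small corrections: the hypothesis only says $P$ is geodesic, not a diameter path, so the reason no vertices lie outside the reconstructed piece is that Corollary~\ref{cor-tnc} forces $\deg(v_0)=\deg(v_l)=1$ while every interior vertex already has degree~$3$; and, as the paper does explicitly, you must also apply Lemma~\ref{lem-3-1} along the auxiliary geodesic through the extra neighbours (e.g.\ $v_1u_1u_3u_5$), not only along $P$, to close up the structure.
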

\begin{proof}
If $i=2$, $G$ contains a subgraph (a) of the following figure. By Corollary \ref{cor-3-1}, $G$ contains a subgraph (b). By applying case $(3^+,3^0)$ of Lemma \ref{lem-3-1} on $v_1v_2v_3v_4$, $G$ contains a subgraph (c). By applying case $(3^+,3^0)$ of Lemma \ref{lem-3-1} on $v_1u_1u_3u_5$, $G$ contains a subgraph (d). Moreover, $\deg(v_0)=\deg(v_6)=1$ by Corollary \ref{cor-tnc}. So $G$ is isometric to graph (d) of the following figure, which is isometric to (e) of Figure \ref{fig-main-1}. If $i\ne 2$, it is impossible to get a graph with nonnegative Ricci curvature by a similar argument. This proves the lemma.
\begin{figure}[H]
\centering
\subfigure[]{
\includegraphics[width=0.33\textwidth]{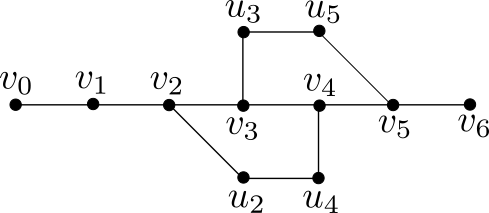}} \qquad
\quad
\subfigure[]{
\includegraphics[width=0.33\textwidth]{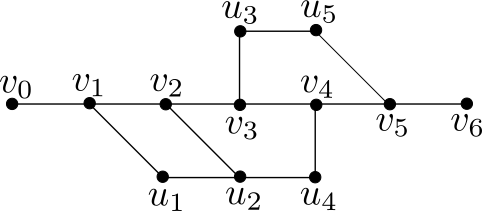}} \qquad
\subfigure[]{
\includegraphics[width=0.33\textwidth]{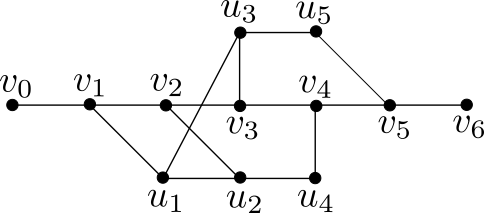}} \qquad
\quad
\subfigure[]{
\includegraphics[width=0.33\textwidth]{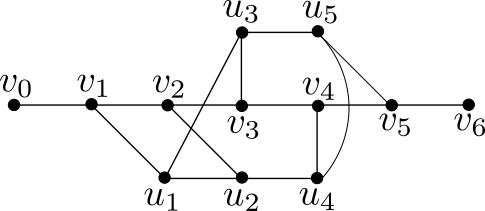}}
\caption{}
\label{fig-lls1}
\end{figure}
\end{proof}

\begin{lemma}
\label{lem-3-2}
Let $P=v_0\cdots v_iv_{i+1}v_{i+2}v_{i+3}\cdots v_l$ be a geodesic path with $l\ge6$. Then none of the followings can be a subgraph of $G$: (c) in Figure \ref{fig-ls23-}; (c) in Figure \ref{fig-ls3+2}; (c), (d) and (e) in Figure \ref{fig-ls303-}; (c), (d), (e), (f) and (g) in Figure \ref{fig-ls3+3-}; (c), (d) and (e) in Figure \ref{fig-ls3+30}.
\end{lemma}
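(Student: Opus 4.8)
The plan is to argue by contradiction: suppose $G\in\mathcal G$ contains one of the listed subgraphs along a geodesic path $P=v_0v_1\cdots v_l$ with $l\ge 6$, and derive a violation of $\deg(G)\le 3$ or $\kappa(G)\ge 0$. First I would cut the work down by symmetry. Reading $P$ backwards (rooting $G$ at $v_l$) interchanges the superscripts $3^{+}\leftrightarrow 3^{-}$, so that (c) of Figure~\ref{fig-ls3+2} is the reflection of (c) of Figure~\ref{fig-ls23-}, and (c)--(e) of Figure~\ref{fig-ls3+30} are the reflections of (c)--(e) of Figure~\ref{fig-ls303-}; moreover, within each case figure of Lemma~\ref{lem-3-1} the ``late'' subgraphs differ only in where the extra neighbours of $v_{i+1}$ and $v_{i+2}$ are attached, so one representative per case suffices. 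I would present (c) of Figure~\ref{fig-ls23-} in full and note that the other cases are analogous.

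For that representative, $s(v_{i+1})=2$ and $s(v_{i+2})=3^{-}$, and the feature distinguishing (c) from (a),(b) is that the extra neighbour $u_{i+2}$ of $v_{i+2}$ (which sits at level $r(v_{i+1})$) reaches back to level $r(v_i)$ only through a \emph{new} vertex $w$ attached to $v_i$; unwinding the distance computation of Lemma~\ref{lem-3-1} one checks that this forces $s(v_i)=3^{+}$ with $w\sim u_{i+2}$. The key steps are then: (1) Since $(3^{-},2)$, $(3^{-},3^{0})$, $(3^{-},3^{+})$ are impossible by Lemma~\ref{lem-3-1}, we get $s(v_{i+3})=3^{-}$, so by Corollary~\ref{cor-3-1}(2) the whole tail satisfies $s(v_j)=3^{-}$ for $i+2\le j\le l-1$, producing a backward strand $u_{i+2}u_{i+3}\cdots u_{l-1}$ parallel to $v_{i+2}\cdots v_{l-1}$; dually $s(v_i)=3^{+}$ together with Corollary~\ref{cor-3-1}(1) produces a forward strand $u_1u_2\cdots u_i$ parallel to $v_1\cdots v_i$, with $u_i=w$. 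This is where $l\ge 6$ is used: it guarantees that at least one of the two strands is long enough to ``see'' the structure forced by Corollary~\ref{cor-3-1}. (2) The two strands are joined by the edge $wu_{i+2}$, so their union (or a suitable sub-path of it) is itself a geodesic path $Q$ of $G$ along which the level function $r(\cdot)$ repeats a value at the step $w\to u_{i+2}$; reading off the state function along $Q$ then exhibits a state pattern forbidden by Lemma~\ref{lem-3-1} (a $3^{0}$ adjacent to a $3^{-}$, or two consecutive $3^{0}$'s, or a $3^{+}$ preceded by a non-$3^{+}$). (3) In the residual sub-cases where $Q$ does not immediately furnish such a pattern, the two strands squeezed alongside $P$ in a graph of maximum degree $\le 3$ force either a vertex of degree $4$ or an edge $xy$ with $|N_x|+|N_y|\ge 3$ and $d(N_x,N_y)\ge 3$, so $\kappa(x,y)<0$ by Corollary~\ref{cor-tnc}.

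The remaining figures fit the same template. The subgraphs (c)--(e) of Figure~\ref{fig-ls303-} (state pair $(3^{0},3^{-})$) and (c)--(g) of Figure~\ref{fig-ls3+3-} (state pair $(3^{+},3^{-})$) all sit at the start of a $3^{-}$-run, so Corollary~\ref{cor-3-1}(2) again yields a long backward strand, and the extra neighbours at the ``boundary'' that single out the bad subgraphs are exactly the ones that make the spliced second geodesic carry a forbidden state pair, or a chord contradicting $d_Q=d_G$ along $Q$, or a negative-curvature edge via Corollary~\ref{cor-tnc}; the mirror argument then disposes of Figures~\ref{fig-ls3+2} and~\ref{fig-ls3+30}.

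The step I expect to be the real obstacle is the family (c)--(g) of Figure~\ref{fig-ls3+3-} (equivalently, its reflection): there one simultaneously has a forward strand forced on the left by Corollary~\ref{cor-3-1}(1) and a backward strand forced on the right by Corollary~\ref{cor-3-1}(2), and the five subgraphs correspond precisely to the five ways these two offset parallel strands can meet around the edge $v_{i+1}v_{i+2}$. For each of the five one must carefully track which strand vertices may coincide and which edges Theorem~\ref{thm-tnc} already forbids, and verify in every case that fitting both strands alongside $P$ in a graph of maximum degree $\le 3$ is incompatible with $\kappa(G)\ge 0$; this is a finite but delicate enumeration, and it is also the place where one has to check that $l\ge 6$, rather than merely $l\ge 5$, is what makes the strands long enough for the contradiction to surface.
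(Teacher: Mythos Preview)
Your plan is essentially the paper's own argument: use the reflection $v_0\leftrightarrow v_l$ to halve the list, pick one representative, use Corollary~\ref{cor-3-1} to propagate parallel strands along $P$, then find a contradiction either as a forbidden state pair on a second geodesic, a vertex of degree $4$, or a negative-curvature edge via Corollary~\ref{cor-tnc}. The paper carries this out for (c) of Figure~\ref{fig-ls3+2} (the mirror of your choice) and declares the rest ``similar''.

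One organisational point is worth making explicit. The paper does not try to run a single unified argument; it splits on the \emph{position} of the window $[i,i+3]$ along $P$ (namely $i\le 1$, $2\le i\le l-4$, and $i=l-3$). In the interior range Corollary~\ref{cor-3-1} produces enough of both strands that a second geodesic $P'=v_0\cdots w\,u\,v_{i+3}v_{i+4}\cdots$ can be written down and Lemma~\ref{lem-3-1} applied to it directly; near the left end the backward strand runs out and the contradiction is a degree overflow on a strand vertex; near the right end the forward strand runs out and one computes $\kappa<0$ on a specific strand edge. Your steps (2) and (3) cover exactly these mechanisms, but you should make this trichotomy explicit rather than leaving it as ``residual sub-cases''---it is precisely what makes the use of $l\ge 6$ transparent (for every $i$ at least one of the three positions applies with enough room). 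With that split in place, the $(3^+,3^-)$ family (c)--(g) is no harder than the others: each of the five joining patterns falls into one of the three positional cases and is dispatched the same way, so the ``delicate enumeration'' you anticipate is shorter than you expect.
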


\begin{proof}
We only show (c) of Figure \ref{fig-ls3+2} can not be a subgraph of $G$. Proofs for other cases are similar. We prove by contradiction. Suppose $i \le 1$, then $i+5 \le l$ and $G$ contains a subgraph (a) of the following figure with $\deg(v_{i+2})=2$ by Corollary \ref{cor-3-1}. This is impossible as $\deg(u_{i+3}) \le 3$ and $u_{i+3}$ must has an extra neighbor $w_{i+3}$ with $r(w_{i+3})<r(u_{i+3})$. Suppose $2\le i \le l-4$. Then $G$ contains a subgraph (b) of the following figure with $\deg(v_{i+2})=2$ by Corollary \ref{cor-3-1}. Applying case $(3^0,3^-)$ of Lemma \ref{lem-3-1} on $P'=v_0\cdots w_{i+3}u_{i+3}v_{i+3}v_{i+4} \cdots$, we get a contradiction. Suppose $i=l-3$. Then $G$ contains a subgraph (c) of the following figure with $\deg(v_{l-1})=2$ by Corollary \ref{cor-3-1}. We have $\kappa(u_l,u_{l-2})<0$ by Theorem \ref{thm-tnc}. This completes the proof.
\begin{figure}[H]
\centering
\subfigure[]{
\includegraphics[width=0.25\textwidth]{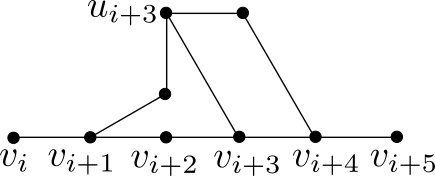}} \quad
\
\subfigure[]{
\includegraphics[width=0.25\textwidth]{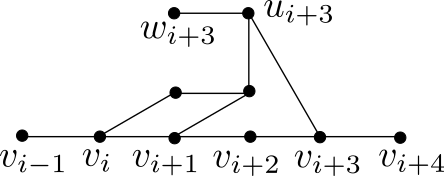}} \quad
\
\subfigure[]{
\includegraphics[width=0.25\textwidth]{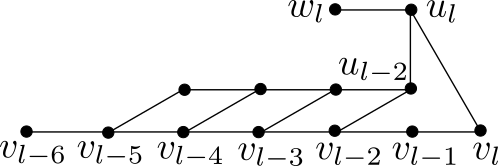}}
\caption{}
\end{figure}

\end{proof}

\section{Classification of graphs}
Let $G \in \mathcal{G}$ be a finite graph and $P=v_0v_1\cdots v_l$ be a diameter of $G$. Then $P$ is a geodesic path. We explore all the available structures of $G$ separately by the following four cases.\\
Case (i): For all $1\le i \le l-1$, $s(v_i)=2$. We discuss this case in Theorem \ref{thm-4-1}.\\
Case (ii): There exists a $v_i$ such that $s(v_i)=3^0$. We discuss this case in Theorem \ref{thm-4-2}.\\
Case (iii): There exists a $v_i$ such that $s(v_i)=3^-$. We discuss this case in Theorem \ref{thm-4-3}.\\
Case (iv): There exists a $v_i$ such that $s(v_i)=3^+$. We discuss this case in Theorem \ref{thm-4-4}.\\

We firstly introduce two lemmas.
\begin{lemma}
\label{lem-4-1}
Let $G(V,E)$ be a finite graph. Suppose $V=A\sqcup B \sqcup C_1 \sqcup C_2$ satisfying:\\
(1) $E(A,B)=E(C_1,C_2)=\emptyset$;\\
(2) The induced subgraphs $G(C_1)$ and $G(C_2)$ are complete graphs, i.e. for all $u,v \in C_i$, $u\sim v$ in $G(C_i)$, $i=1,2$;\\
(3) The induced subgraphs $G_A=G(A\sqcup C_1 \sqcup C_2)$ and $G_B=G(B \sqcup C_1 \sqcup C_2)$ are connected graphs.\\
Then $G$ contains a geodesic cycle $C$ with $|C| \ge d_{G_A}(C_1,C_2)+d_{G_B}(C_1,C_2)$.
\end{lemma}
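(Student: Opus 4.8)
The plan is to build the desired geodesic cycle by gluing together a shortest path through $G_A$ and a shortest path through $G_B$, using the complete subgraphs $G(C_1)$ and $G(C_2)$ as the only admissible "transfer points" between the two sides. Concretely, pick a vertex $a\in C_1$ and $b\in C_2$ realizing $d_{G_A}(C_1,C_2)$ via a geodesic $\alpha=a x_1\cdots x_{p-1} b$ in $G_A$ with $p=d_{G_A}(C_1,C_2)$, and similarly a geodesic $\beta=a' y_1\cdots y_{q-1} b'$ in $G_B$ with $a'\in C_1$, $b'\in C_2$, $q=d_{G_B}(C_1,C_2)$. The first observation I would record is that, because $E(A,B)=\emptyset$ and $E(C_1,C_2)=\emptyset$, any path from $C_1$ to $C_2$ lying in $G_A$ has all its interior vertices in $A$, and likewise for $G_B$; in particular the interiors of $\alpha$ and $\beta$ are disjoint (one lies in $A$, the other in $B$), and also disjoint from $C_1\cup C_2$ except at the endpoints. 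Using condition (2), the edges $a a'$ (inside $G(C_1)$, allowing $a=a'$) and $b b'$ (inside $G(C_2)$, allowing $b=b'$) close up $\alpha$ followed by $\beta$ reversed into a closed walk $C$; since all vertices involved are distinct, $C$ is a genuine cycle, and $|C|\ge p+q=d_{G_A}(C_1,C_2)+d_{G_B}(C_1,C_2)$ (with equality if $a\neq a'$ and $b\neq b'$, otherwise the bound still holds with room to spare).

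The substantive point is that $C$ is a \emph{geodesic} cycle, i.e. every sub-arc of $C$ of length at most $|C|/2$ is a geodesic path of $G$. I would prove this by contradiction: suppose two vertices $s,t$ on $C$ satisfy $d_G(s,t)<d_C(s,t)$, and take a shortest $G$-geodesic $\gamma$ from $s$ to $t$. The key structural claim is that $\gamma$ must cross between the "$A$-region" and the "$B$-region", and any such crossing forces $\gamma$ to pass through $C_1\cup C_2$, because $E(A,B)=\emptyset$ means one cannot step directly from an $A$-vertex to a $B$-vertex. So $\gamma$ meets $C_1$ or $C_2$; using that $G(C_i)$ is complete and that $\alpha,\beta$ are already geodesics within $G_A$, $G_B$ respectively, one shows $d_G(s,t)$ is at least the distance one reads off along $C$, contradicting the assumption. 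The case analysis splits according to whether $s$ and $t$ lie on the same side (both on the $\alpha$-arc, or both on the $\beta$-arc) — where geodesicity of $\alpha$ in $G_A$ together with $G_A$ being an induced (hence distance-nonincreasing is the wrong direction; rather one uses that a shortcut would have to leave $G_A$, hence pass through $C_2$ again) gives the bound — or on opposite sides, where the shortcut must use at least one full transfer through $C_1$ and one through $C_2$.

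I expect the main obstacle to be making the "a shortcut must revisit $C_1\cup C_2$" argument fully rigorous and uniform across the cases, in particular handling degenerate situations: when $p=0$ or $q=0$ (so $C_1$ and $C_2$ already meet within one side), when $a=a'$ or $b=b'$, and when $|C|$ is odd so that "half of $C$" is not an integer. I would deal with the degeneracies first as easy special cases (if $p=0$ then $C_1\cap C_2\neq\emptyset$, but condition (1) with $E(C_1,C_2)=\emptyset$ and completeness would make the whole of $C_1\cup C_2$ a clique — one checks this still yields a cycle of the claimed length, or the statement is vacuous), and then run the contradiction argument on the generic configuration. The cleanest formulation is probably: define, for each vertex $w$ of $C$, its "$C$-coordinate" and show $d_G(w,w')\ge$ the cyclic distance in $C$ by exhibiting, for any $G$-geodesic, a monotone accounting of how much $C$-coordinate it can traverse per step — $1$ per ordinary step, and a controlled jump only when passing through a $C_i$-clique, which is exactly matched by the corresponding chord of $C$.
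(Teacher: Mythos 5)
Your construction has a genuine gap: the cycle obtained by gluing an arbitrary $G_A$-geodesic $\alpha$ realizing $d_{G_A}(C_1,C_2)$ to an arbitrary $G_B$-geodesic $\beta$ realizing $d_{G_B}(C_1,C_2)$ through clique edges need not be geodesic, so the contradiction argument you sketch cannot be completed --- the statement it is supposed to prove is false for your $C$. Concretely, take $C_1=\{a,a'\}$, $C_2=\{b,b'\}$, $A=\{x_1,\dots,x_5\}$, $B=\{y,z\}$, with edges $aa'$, $bb'$, the path $a x_1x_2x_3x_4x_5 b$, and $a'y$, $yb'$, $az$, $zb$. Hypotheses (1)--(3) hold, $d_{G_A}(C_1,C_2)=6$ and $d_{G_B}(C_1,C_2)=2$. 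Your recipe may legitimately choose $\alpha=ax_1\cdots x_5b$ and $\beta=a'yb'$, producing the $10$-cycle $a x_1x_2x_3x_4x_5 b\, b'\, y\, a'$; but $d_G(a,x_5)\le 3$ (via $a\sim z\sim b\sim x_5$), while the distance along this cycle between $a$ and $x_5$ is $5$, so it is not a geodesic cycle. Note that the shortcut does pass through $C_1\cup C_2$, so your ``any shortcut must transfer through a clique'' accounting does not rescue the argument: the loss comes from the two connector edges $aa'$ and $bb'$ which your cycle uses but a competing route avoids. (In this example the $8$-cycle $ax_1\cdots x_5 b z$ is geodesic, so the lemma itself is unharmed.)

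The missing idea --- and it is exactly how the paper proceeds --- is a global minimization rather than minimizing the two halves separately: consider the family $\mathcal{C}$ of closed walks of the form $P_1P_AP_2P_B$, where $P_1,P_2,P_A,P_B$ are walks in $C_1,C_2,A,B$ respectively, and let $C$ be a shortest member of $\mathcal{C}$. Then $C$ is a cycle, and if some $u,v\in C$ had $d_G(u,v)<d_C(u,v)$, a shortest $u$--$v$ path $P_G$ meets each $C_i$ in at most two consecutive vertices (because $C_i$ is complete and $P_G$ is shortest), hence $P_G$ has one of finitely many forms such as $P_AP_1P_A'P_2P_B$; splicing $P_G$ with an arc of $C$ then yields a strictly shorter member of $\mathcal{C}$, contradicting minimality. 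The bound $|C|\ge d_{G_A}(C_1,C_2)+d_{G_B}(C_1,C_2)$ follows because the $A$-arc and the $B$-arc of $C$, together with their clique endpoints, are $C_1$--$C_2$ paths in $G_A$ and $G_B$. Your observations about transfers through the cliques are the right raw material for that case analysis, but they only close the argument once the cycle is chosen by this global minimality, not for a cycle assembled from an arbitrarily chosen pair of geodesics.
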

\begin{figure}[H]
\centering
\subfigure{
\includegraphics[width=0.4\textwidth]{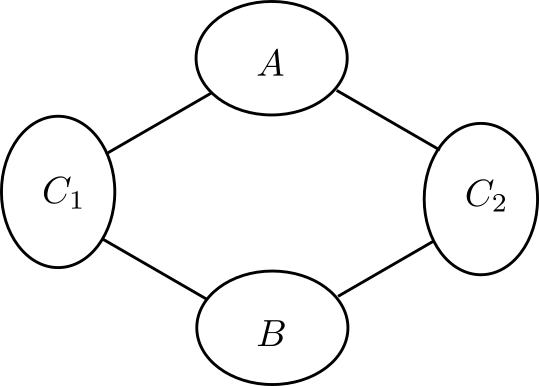}} \ \
\qquad
\subfigure{
\includegraphics[width=0.4\textwidth]{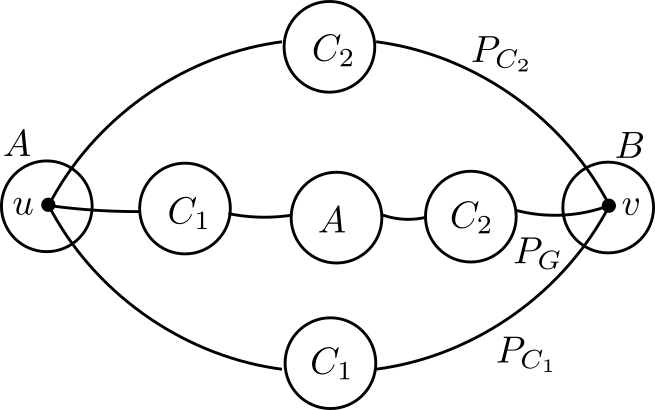}}
\caption{}
\end{figure}

\begin{proof}
Let $\mathcal{C}=$\{$W$ is a closed walk : $W=P_1P_AP_2P_B$, where $P_1$, $P_2$, $P_A$ and $P_B$ are walks in $C_1$, $C_2$, $A$ and $B$ respectively\}. Let $C \in \mathcal{C}$ with shortest length, then it is a cycle. Moreover, we can prove that $C$ is a geodesic cycle. Suppose not, there are $u,v\in C$ such that $d_G(u,v)<d_C(u,v)$. We only prove the case that $u\in A$ and $v \in B$, proofs for other cases are similar.

Let $P_G$ be a shortest walk between $u$ and $v$ in $G$ and $P_{C_i}$ be a shortest walk between $u$ and $v$ in $C$ crossing $C_i$, $i=1,\ 2$. Note that $P_{C_1}$ and $P_{C_2}$ are disjoint paths from $u$ to $v$ and $C$ is the union of them.

Next, we claim that $P_G$ has one of the following forms: $P_AP_1P_A'P_2P_B$, $P_AP_1P_BP_2P_B'$, $P_AP_2P_A'P_1P_B$, $P_AP_2P_BP_1P_B'$, $P_AP_1P_B$ or $P_AP_2P_B$. In fact, it is easy to see that $P_G\cap C_1$ is connected in $P_G$ and $|P_G\cap C_1|\le2$ as $C_1$ is complete and $P_G$ is a shortest walk. The same conclusion holds for $P_G\cap C_2$. So the claim holds.

Now, if $P_G=P_AP_1P_A'P_2P_B$, then $P_G$ and $P_{C_1}$ form a closed walk. By connecting $C_1$ part in $P_G$ and $C_1$ part in $P_{C_1}$, we get a new closed walk $C'\in \mathcal{C}$ with a shorter length. This contradicts to the fact that $C$ is a shortest walk. Proofs for other forms are similar. Then we proved $d_G(u,v)=d_C(u,v)$ for all $u,v \in C$, i.e. $C$ is a geodesic cycle. The inequality $|C| \ge d_{G_A}(C_1,C_2)+d_{G_B}(C_1,C_2)$ is obtained directly from the construction of $C$.
\end{proof}

\begin{lemma}
\label{lem-4-2}
Suppose $G\in \mathcal{G}$ contains a geodesic cycle $C$ with $|C| \ge 8$, then $|C|\ge 11$ and $G$ must be a cycle, a prism graph or a M\"obius ladder.
\end{lemma}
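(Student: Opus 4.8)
## Proof Proposal for Lemma \ref{lem-4-2}

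The plan is to analyze the structure of a graph $G \in \mathcal{G}$ containing a geodesic cycle $C$ with $|C| \ge 8$, using the local structure lemmas from Section 3 applied along $C$ itself. Since $C$ is a geodesic cycle with $|C| \ge 8$, any sufficiently long arc of $C$ is a geodesic path in $G$ (more precisely, an arc of length $\le \lfloor |C|/2 \rfloor$ is a geodesic path), so Lemma \ref{lem-3-1}, Corollary \ref{cor-3-1}, Lemma \ref{lem-3-3} and Lemma \ref{lem-3-2} all apply to such arcs. The key observation is that $C$ has no ``endpoints'': every vertex on $C$ already has two neighbors along $C$, so each vertex $v \in V(C)$ has at most one extra neighbor, and we can define a state function $s(v) \in \{2, 3^-, 3^0, 3^+\}$ relative to any chosen root.

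First I would observe that the states $3^-$ and $3^+$ are root-dependent artifacts: on a cycle there is no global ``distance increasing'' direction. Pick any vertex $v \in V(C)$ of degree 3 with extra neighbor $u$. If $u \notin V(C)$, then consider a long geodesic arc of $C$ through $v$; by Corollary \ref{cor-3-1}, a single occurrence of state $3^+$ (or $3^-$) forces an entire half of the arc to have the same state and produces the rigid subgraphs (b),(c),(e),(f) of the figure after Corollary \ref{cor-3-1} — these are incompatible with wrapping around into a cycle of length $\ge 8$ unless the extra neighbors also lie on a parallel cycle, which is exactly the ladder/Möbius-ladder/prism pattern. I would use Lemma \ref{lem-3-2} to kill the ``bad'' local configurations (the (c),(d),(e)-type subgraphs), leaving only the state patterns where every degree-3 vertex $v$ on $C$ has its extra neighbor $u$ also on $C$, i.e. $C$ together with chords. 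Then Corollary \ref{cor-tnc} forbids chords joining vertices at $C$-distance $\ge 3$, so every chord has endpoints at $C$-distance exactly $2$ (distance $1$ is impossible in a simple graph, and a chord realizing distance $2$ would create a triangle — I would check whether triangles are allowed; if $|C| \ge 8$ is geodesic, a chord at distance $2$ contradicts geodesicity of the length-$2$ arc only if... actually distance-$2$ chords are fine as shortcuts are not created). The cleanest route: show every degree-$3$ vertex's extra edge is a ``rung'' pairing $C$ into two parallel cycles.

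Next I would argue that the rungs must be ``consecutive and parallel,'' i.e. $G$ is obtained from two disjoint cycles $C, C'$ of equal length by adding a perfect matching between them respecting cyclic order (possibly with one twist) — this is precisely a prism graph or a Möbius ladder. The mechanism is: apply the $(3^0,3^0)$, $(3^0, 2)$, $(2, 3^0)$ cases of Lemma \ref{lem-3-1} along $C$; case (b) of Figure \ref{fig-ls3030} is excluded by Lemma \ref{lem-3-3} (it would force $G$ to be the particular finite graph of Figure \ref{fig-main-1}, which has diameter $< 8$ and no geodesic $8$-cycle — contradiction), so only case (a) survives, which says two adjacent rungs sit at $C$-distance exactly... and likewise a degree-$2$ vertex on $C$ cannot be ``isolated'' between rungs without violating curvature. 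Iterating forces either $C$ has no chords at all (so $G = C$ is a cycle) or all vertices of $C$ have degree $3$ with parallel rungs (so $G$ is a prism or Möbius ladder). For the bound $|C| \ge 11$: a prism graph $Y_n$ or Möbius ladder $M_n$ has diameter roughly $n/2 + 1$; to have diameter $\ge 6$ (required for $G \in \mathcal{G}$) one needs the underlying cycle length to be at least $11$ — I would compute $\diam$ of small prisms and Möbius ladders directly and observe that $|C| = 8, 9, 10$ all give diameter $< 6$, hence are excluded from $\mathcal{G}$, while $|C| = 11$ (e.g. the Möbius ladder $M_{11}$... wait, the girth-type cycle here is $C$ itself of even length for a prism) — more carefully, for a prism over an $m$-cycle the ``geodesic cycle'' $C$ must be taken as one of the two $m$-cycles or a mixed $4$-cycle; the relevant $C$ with $|C| \ge 8$ is an $m$-cycle with $m \ge 8$, and $\diam = \lceil m/2 \rceil + 1 \ge 6 \iff m \ge 10$; since a geodesic cycle cannot create shortcuts, actually $m \ge 11$ is needed when combined with the constraint that the diameter path is at least $6$ and lies partly on $C$. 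I would finalize this numerology against the figures.

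The hard part will be the rigidity step: showing that once every degree-$3$ vertex on $C$ has its extra neighbor on $C$ and all chords are distance-$2$ rungs, the rungs are forced to be globally parallel (all ``same side''), ruling out configurations where the rung pattern is irregular or where $C$ fails to split into two cycles. This requires a careful induction walking around $C$ and invoking the $(3^0, \cdot)$ and $(\cdot, 3^0)$ cases of Lemma \ref{lem-3-1} together with Lemma \ref{lem-3-2} at each step to propagate the parallel structure, plus handling the ``seam'' where one returns to the starting rung (which is where a single twist — the Möbius ladder — can appear, but no more than one, else geodesicity of $C$ fails). The other delicate point is verifying that mixed cases — some degree-$2$ and some degree-$3$ vertices on $C$ — cannot occur for $|C| \ge 8$; I expect Corollary \ref{cor-tnc} plus the exclusion lists of Lemma \ref{lem-3-2} to close these off, but the bookkeeping over all adjacent-state pairs is the main labor.
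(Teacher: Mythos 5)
There is a genuine gap at the pivot of your argument. You reduce to the case where every degree-$3$ vertex of $C$ has its extra neighbor on $C$, so that the structure is ``$C$ together with chords,'' and you convince yourself that distance-$2$ chords are compatible with geodesicity. This is false: since $C$ is a geodesic cycle, $d_G(u,v)=d_C(u,v)$ for all $u,v\in V(C)$, so a chord joining vertices at $C$-distance $2$ would give $d_G(u,v)=1<2$, a contradiction; in fact $C$ can have no chords at all. Consequently the extra neighbor of every degree-$3$ vertex of $C$ lies \emph{outside} $C$, and the prism/M\"obius-ladder structure is built from a second cycle parallel to $C$ joined to it by rungs, not from $C$ being ``paired into two parallel cycles'' by its own chords. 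The paper's proof goes the other way: it takes $v_0\in C$ with an extra neighbor $u_0$, rules out $u_0\sim v_1$ (and symmetrically $u_0\sim v_{|C|-1}$), treats the case $u_0\sim v_2$ (which propagates to a prism or M\"obius ladder), and otherwise applies the case analysis of Lemma \ref{lem-3-1} rooted at vertices of $C$ to short geodesic arcs, walking around the cycle; this yields either a prism/M\"obius ladder or one of finitely many exceptional graphs, which for $|C|=8,9,10$ all have diameter less than $6$ (hence are not in $\mathcal{G}$) and for $|C|\ge 11$ contain an edge of negative curvature.

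A second problem is that several tools you invoke do not apply to arcs of $C$ when $|C|$ is small. Lemma \ref{lem-3-2} and Lemma \ref{lem-3-3} require a geodesic path of length at least $6$, but an arc of a geodesic cycle is geodesic only up to length $\lfloor |C|/2\rfloor$, which is $4$ or $5$ precisely in the critical cases $|C|=8,9,10,11$ needed to establish the bound $|C|\ge 11$; the paper therefore uses only Lemma \ref{lem-3-1} and Corollary \ref{cor-3-1} on such short arcs, re-rooting as it goes. Relatedly, your derivation of $|C|\ge 11$ is left as unfinished ``numerology'' about diameters of prisms and M\"obius ladders, whereas the actual argument must also dispose of the exceptional graphs produced by the propagation for $|C|=8,9,10$, all of which are checked to have diameter less than $6$. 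As written, the proposal is a plan whose central structural claim is incorrect, so it does not establish the lemma.
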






\begin{proof}
Suppose $|C|=8$. We assume that there exists a vertex, say $v_0$, has an extra neighbor $u_0$, otherwise $G=C$. Note that every path $P$ in $C$ with $|P|\le 5$ is geodesic in $G$.

We claim that $G$ is a prism or M\"obius ladder if $u_0 \sim v_2$. By applying local structure $(3^-,\cdot)$ on $v_1v_2v_3v_4$ in $(G,v_0)$, we get that $G$ contains (a) or (b) of the following figure as a subgraph. By symmetric, $G$ has a subgraph (c) or (d) in the following figure. By doing this repeatedly, we get that $G$ has a subgraph (e). Finally we get a prism graph or a M\"obius ladder.

\begin{figure}[H]
\centering
\subfigure[]{

\includegraphics[width=0.3\textwidth]{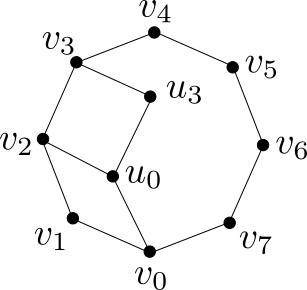}} \ \
\subfigure[]{

\includegraphics[width=0.3\textwidth]{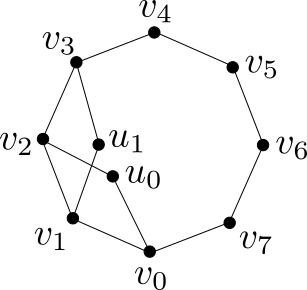}} \ \
\subfigure[]{

\includegraphics[width=0.3\textwidth]{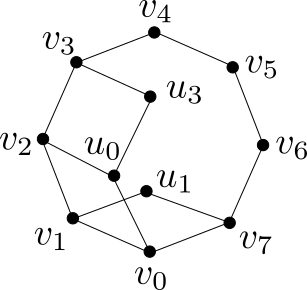}} \ \
\subfigure[]{

\includegraphics[width=0.3\textwidth]{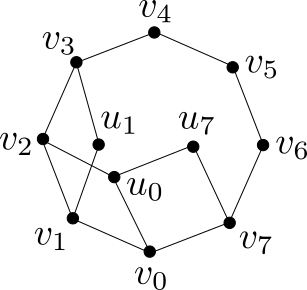}} \ \
\subfigure[]{

\includegraphics[width=0.3\textwidth]{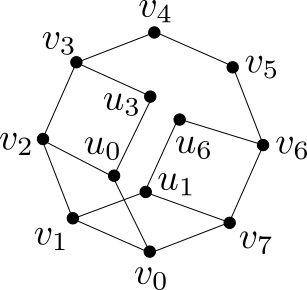}} \ \
\caption{}

\end{figure}

We claim that $u_0$ is not a neighbor of $v_1$. Suppose not, $G$ has a subgraph (a) of the following figure. Applying local structure $(3^0,\cdot)$ on $v_0v_1v_2v_3$ in $(G,v_0)$, we know that $G$ has a subgraph (b) or (c) of the following figure. This is impossible by the symmetry of the geodesic cycle. So our claim is true.
\begin{figure}[H]
\centering
\subfigure[]{
\includegraphics[width=0.3\textwidth]{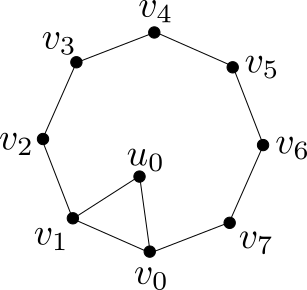}} \ \
\subfigure[]{
\includegraphics[width=0.3\textwidth]{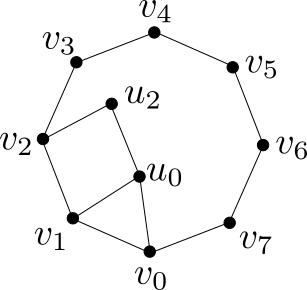}} \ \
\subfigure[]{
\includegraphics[width=0.3\textwidth]{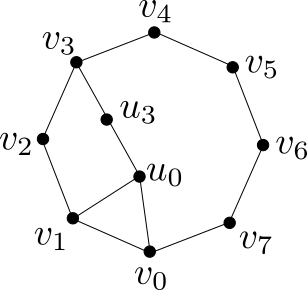}} \ \
\caption{}
\end{figure}

The previous claim also shows $u_0 \nsim v_7$. By applying local structure $(3^+, \cdot)$ on $v_7v_0v_1v_2$ in $(G,v_7)$, we know that $G$ has one of the following subgraphs.
\begin{figure}[H]
\centering
\subfigure[]{
\includegraphics[width=0.32\textwidth]{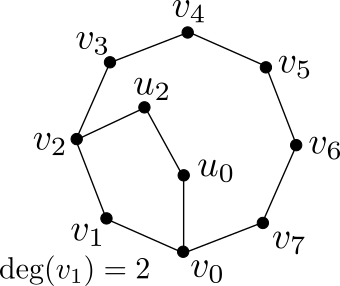}} \
\subfigure[]{
\includegraphics[width=0.3\textwidth]{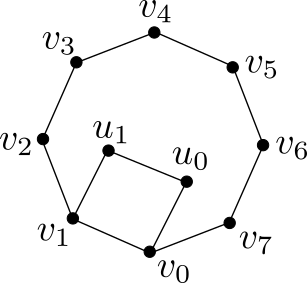}} \ \
\subfigure[]{
\includegraphics[width=0.3\textwidth]{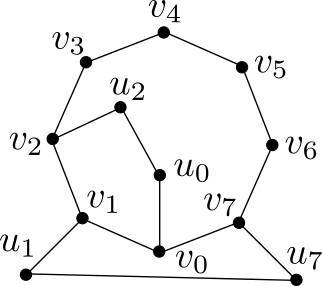}} \ \
\caption{}
\label{Fig.C6}
\end{figure}

By applying local structures on $C$ as we did previously, we get the following results. If $G$ has a subgraph (c) in Figure \ref{Fig.C6}, then $G$ is isomorphic to (a) of Figure \ref{fig-others}. Otherwise $G$ must be isometric to (b), (c) or (d) of the following figure if $G$ is not a prism graph or M\"obius ladder.
\begin{figure}[H]
\centering
\subfigure[]{
\includegraphics[width=0.27\textwidth]{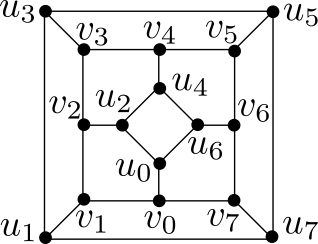}} \qquad
\subfigure[]{
\includegraphics[width=0.25\textwidth]{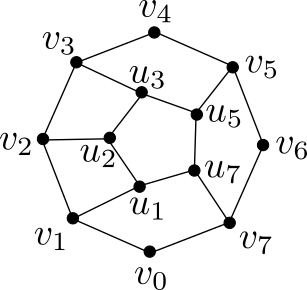}} \qquad \qquad
\subfigure[]{
\includegraphics[width=0.25\textwidth]{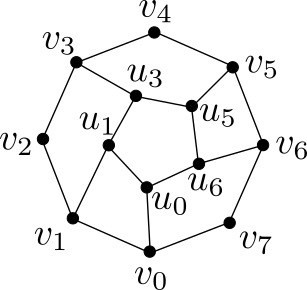}} \qquad
\subfigure[]{
\includegraphics[width=0.25\textwidth]{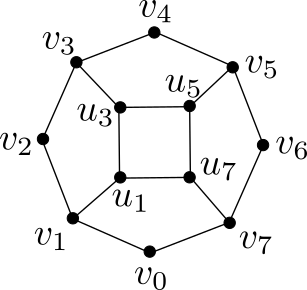}}
\caption{}
\label{fig-others}
\end{figure}

Note that the diameters of all the possible graphs we get are less than 6, thus they are not elements of $\mathcal{G}$. It follows that $|C| \ne 8$. We also have $|C| \ne 9,10$ with similar arguments. When we do the previous steps on a geodesic cycle $C$ with $|C|\ge11$, the graphs with the same forms shown in Figure \ref{fig-others} contain edges with negative curvature. This indicates that $G$ must be a cycle, prism graph or M\"obius ladder.

\end{proof}

\begin{theorem}
\label{thm-4-1}
Let $P=v_0v_1\cdots v_l$ be a diameter of $G \in \mathcal{G}$ with $l \ge 6$. If for all $1\le i\le l-1$,  $s(v_i)=2$, then $G=P$ or isometric to a cycle.
\end{theorem}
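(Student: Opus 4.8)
The plan is to analyze the local structure around the diameter path $P=v_0v_1\cdots v_l$ under the hypothesis that every internal vertex $v_i$ ($1\le i\le l-1$) has $s(v_i)=2$, i.e.\ $\deg(v_i)=2$ for all $1\le i\le l-1$. First I would observe that since $P$ is geodesic and every internal vertex of $P$ already has its two neighbors on $P$, no vertex outside $P$ can attach to an internal vertex. Hence any edge of $G$ leaving $P$ must emanate from an endpoint $v_0$ or $v_l$. If both $\deg(v_0)=\deg(v_l)=1$, then $P$ is a connected component, and since $G$ is connected, $G=P$, giving the first alternative.

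Next I would treat the remaining case: at least one endpoint, say $v_0$, has an extra neighbor. The first claim to establish is that $v_0$ cannot have two extra neighbors, and more importantly that any extra neighbor $w$ of $v_0$ must actually be $v_{l}$ or a vertex forcing $P$ to close up into a cycle. Concretely, let $w\sim v_0$ with $w\notin\{v_1\}$. Consider the curvature of an edge near $v_0$, say $\kappa(v_1,v_2)$ (or $\kappa(v_0,v_1)$), computed via Theorem \ref{thm-tnc}: since $\deg(v_1)=\deg(v_2)=2$ their one-balls are very small, and nonnegativity of curvature together with $d(v_0,v_3)=3$ forces strong constraints on distances from $w$ to the $v_i$. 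The cleanest route is to use Corollary \ref{cor-tnc} and the fact that if $v_0$ had too many extra neighbors at distance $\ge 3$ from $v_1$'s neighborhood we would get negative curvature; but the essential point is that walking along $P$ from $v_0$, the extra neighbor $w$ of $v_0$ must be adjacent to some $v_j$ with $j$ small, and by geodesicity of $P$ combined with the distance constraint, $w$ must in fact lie on a path closing $P$ up. I would make this precise by showing $d(v_0,w)$-type considerations force $w=v_l$ (if $w$ is already on $P$) or that $v_0$'s extra neighbor leads, edge by edge using the $(2,2)$ local structure of Lemma \ref{lem-3-1} (Case $(2,2)$, Figure \ref{fig-ls22}), to the creation of a second path from $v_0$ to $v_l$ of length $l$; since $G$ has maximum degree $3$ and all internal vertices have degree $2$, this second path together with $P$ forces $G$ to be exactly a cycle.

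More carefully, the key mechanism I expect to use: suppose $v_0$ has an extra neighbor and $v_l$ has an extra neighbor (if only one endpoint has an extra neighbor, one shows via Theorem \ref{thm-tnc} applied to the edges incident to that endpoint that curvature becomes negative, a contradiction, so this sub-case is vacuous unless the extra neighbor is $v_l$ itself or creates the cycle). Then consider the vertex $u_0\sim v_0$ and $u_l\sim v_l$ with $u_0,u_l\notin P$ (or possibly equal to vertices of $P$). Using that $\deg$ is at most $3$ everywhere and the degree-$2$ constraint on $P$'s interior, every new vertex introduced has degree forced, and one shows by induction along $P$ — invoking the $(2,2)$ structure repeatedly — that the only consistent completion is for the two "dangling" ends to meet, i.e.\ $G$ is a single cycle of length $l+1$ or $2l$-ish; a short finite check, or the geodesic-cycle machinery of Lemma \ref{lem-4-2}, then pins down that $G$ is isometric to a cycle. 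Alternatively, and perhaps more cleanly, I would invoke Lemma \ref{lem-4-1}/Lemma \ref{lem-4-2}: if an extra neighbor connects "around," one produces a geodesic cycle $C$ of length $\ge 8$ (using $l\ge 6$), and Lemma \ref{lem-4-2} says $G$ is a cycle, prism, or M\"obius ladder; but the latter two have interior vertices of $P$ with degree $3$, contradicting $s(v_i)=2$, so $G$ is a cycle.

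The main obstacle I anticipate is ruling out "exotic" attachments at the endpoints — specifically, showing that an extra neighbor of $v_0$ cannot wander far from $P$ and attach in some complicated way, but must instead either be absent or force the cycle structure. This requires careful bookkeeping of distances (to preserve $d(v_0,v_l)=l$, i.e.\ that $P$ remains geodesic, no shortcut may be created) combined with the curvature inequality from Theorem \ref{thm-tnc} applied to edges $v_0v_1$ and $v_{l-1}v_l$ and the degree bound. Once it is shown that the only curvature-compatible, geodesic-preserving possibility is that the two ends link up through a chain of degree-$2$ vertices, the conclusion $G\cong C_{l+1}$ (a cycle) is immediate. I would organize the write-up as: (1) no interior attachments; (2) if both endpoints have degree $1$, $G=P$; (3) otherwise, reduce to the existence of a geodesic cycle and apply Lemma \ref{lem-4-2}, or argue directly by the degree-$2$ constraint that $G$ must be a cycle.
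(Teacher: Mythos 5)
Your final organization --- (1) no attachments at interior vertices because they have degree $2$, (2) $G=P$ when nothing attaches at the ends, (3) otherwise produce a geodesic cycle via Lemma \ref{lem-4-1} and finish with Lemma \ref{lem-4-2}, excluding the prism and M\"obius ladder because their vertices all have degree $3$ --- is exactly the paper's proof, and that part is sound (the paper even gets $|C|\ge 2l\ge 12$ rather than your $\ge 8$, but $\ge 8$ already suffices for Lemma \ref{lem-4-2}).

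The genuine gap is in how you rule out the intermediate case: an endpoint, say $v_0$, having an extra neighbor $w$ whose part of the graph outside $P$ does \emph{not} connect back to $v_l$. You propose to kill this case with curvature, claiming that Theorem \ref{thm-tnc} applied to $\kappa(v_0,v_1)$ or $\kappa(v_1,v_2)$ forces $w$ to sit close to $P$ (``adjacent to some $v_j$ with $j$ small'') or makes some curvature negative. Both claims fail. Since every interior vertex of $P$ has degree $2$, $w$ cannot be adjacent to any $v_j$ with $1\le j\le l-1$ at all (and $w=v_l$ is impossible since $d(v_0,v_l)=l\ge 6$). More importantly, curvature gives no obstruction here: with $\deg(v_1)=2$, the single extra neighbor $w$ of $v_0$ can always be matched to $v_2$ or left unmatched at cost $2$ in Theorem \ref{thm-tnc}, so $\kappa(v_0,v_1)\ge 0$ regardless of where $w$ lies --- indeed attaching a pendant path at $v_0$ yields a longer path, which has nonnegative curvature everywhere. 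Nor is ``no shortcut may be created'' the issue: a dangling attachment creates no shortcut and $P$ stays geodesic. The argument that is actually needed (and is the paper's one-line justification) is the \emph{maximality} of $P$ as a diameter path: since interior vertices of $P$ have degree $2$, any component of $G(V\setminus V(P))$ attaches to $P$ only at $v_0$ or $v_l$; if such a component meets $v_0$ but does not link $v_0$ to $v_l$, then any $w$ in it satisfies $d(w,v_l)=d(w,v_0)+l>l=\diam(G)$, a contradiction. With that replacement your case (3) genuinely reduces to ``$v_0$ is connected to $v_l$ outside $P$,'' where Lemma \ref{lem-4-1} and Lemma \ref{lem-4-2} apply as you say; as written, the exclusion step rests on a curvature claim that is false.
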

\begin{proof}
Let $G_B:=G(V\setminus \{v_0,v_1,\cdots, v_l\})$. If $v_0$ is not connected to $v_l$ via $G_B$, then $G=P$, otherwise we will get a longer diameter. If $v_0$ is connected to $v_l$ via $G_B$. By setting $C_1=\{v_0\}$ and $C_2=\{v_l\}$ and applying Lemma \ref{lem-4-1}, we get a geodesic cycle $C$ with $|C|\ge 12$. Then the theorem holds by Lemma \ref{lem-4-2}.
\end{proof}

\begin{lemma}
\label{lem-4-3}
Let $P=v_0v_1\cdots v_l$, $l \ge 6$ be a diameter of $G \in \mathcal{G}$. If there exists $3\le i \le l-3$ such that $s(v_i)=3^0$, then $s(v_j)=3^0$ for all $3\le i \le l-3$, and $G$ contains a subgraph shown in the following figure.
\begin{figure}[H]
\centering
\includegraphics[width=0.3\textwidth]{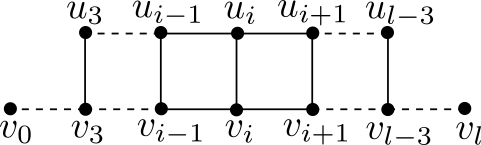}
\caption{}
\end{figure}
\end{lemma}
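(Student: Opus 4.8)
The plan is to argue in two stages: first, that the $3^0$-state spreads, so that $s(v_j)=3^0$ for every $j$ with $3\le j\le l-3$; and second, that the ladder subgraph can then be read off from the $(3^0,3^0)$ case of Lemma \ref{lem-3-1}.

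For the spreading it suffices to show that $s(v_j)=3^0$ together with $3\le j\le l-4$ forces $s(v_{j+1})=3^0$ (the analogous statement for $s(v_{j-1})$ when $4\le j$ follows by running the same argument on the reversed diameter $v_l\cdots v_0$, under which the signs $3^+$ and $3^-$ are interchanged), since one then induces outward from the given index $i$. So fix such a $j$, write $u_j$ for the extra neighbour of $v_j$ (so $r(u_j)=j$ and $u_j$ has a neighbour $u_j'$ at radius $j-1$), and suppose $s(v_{j+1})\ne 3^0$. The pattern $(3^0,3^+)$ is forbidden by Lemma \ref{lem-3-1}, so $s(v_{j+1})\in\{2,3^-\}$, and I claim each possibility is contradictory. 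If $s(v_{j+1})=2$, then $\deg(v_{j+1})=2$, so $\kappa(v_j,v_{j+1})\ge 0$ and Corollary \ref{cor-tnc}, together with $d(v_{j-1},v_{j+2})=3$, force $d(u_j,v_{j+2})=2$; a common neighbour $w$ of $u_j$ and $v_{j+2}$ then has $r(w)=j+1$, so $\deg(v_{j+2})=3$ and $s(v_{j+2})=3^-$ with $w$ its extra neighbour. By Corollary \ref{cor-3-1}(2), $s(v_{j+3})=3^-$ with an extra neighbour $w'$, and since $\deg(v_{j+1})=2$ the $(3^-,3^-)$ clause of Lemma \ref{lem-3-1} gives $w\sim w'$. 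A short computation with Theorem \ref{thm-tnc} then yields $\kappa(u_j,w)<0$: here $\# B_{u_jw}=2$, while both $d(u_j',v_{j+2})\ge 3$ and $d(u_j',w')\ge 3$ (from the radii $r(u_j')=j-1$, $r(v_{j+2})=r(w')=j+2$), so every transport between the neighbour sets $\{v_j,u_j'\}$ and $\{v_{j+2},w'\}$ costs at least $3$. If instead $s(v_{j+1})=3^-$ with extra neighbour $w$ (so $r(w)=j$), then again $s(v_{j+2})=3^-$ with extra neighbour $w'$; since $\deg(v_j)=3$ the $(3^-,3^-)$ clause gives $w\sim w'$, while the $(3^0,3^-)$ clause applied to $v_jv_{j+1}$ — after Lemma \ref{lem-3-2} eliminates alternatives (c)--(e) of Figure \ref{fig-ls303-} — attaches $w$ to $v_j$ only through $u_j$ (i.e.\ $w\sim u_j$ or $w=u_j$). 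In every surviving picture all neighbours of $w$ lie at radius $j$ or $j+1$, so $w$ has no neighbour at radius $j-1$, contradicting $r(w)=j\ge 3$. Hence $s(v_{j+1})=3^0$, and therefore $s(v_j)=3^0$ for all $3\le j\le l-3$.

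With this in hand, each $v_j$ with $3\le j\le l-3$ has a unique extra neighbour $u_j$ at radius $j$; the $u_j$ are pairwise distinct and avoid $V(P)$ by their radii. For $3\le j\le l-4$ the $(3^0,3^0)$ clause of Lemma \ref{lem-3-1} applies to $v_jv_{j+1}$, and it cannot be alternative (b) of Figure \ref{fig-ls3030}: by Lemma \ref{lem-3-3} that would force $G$ to be the finite graph (e) of Figure \ref{fig-main-1}, whose diameter is $6$, contradicting $l\ge 7$ (and if $l=6$ then $[3,l-3]$ contains no consecutive pair, so no such edge occurs). Thus alternative (a) holds, $u_j\sim u_{j+1}$ for all $3\le j\le l-4$, and chaining the rungs $\{v_j,u_j\}$, together with the attachments forced at the boundary edges $v_2v_3$ and $v_{l-3}v_{l-2}$, exhibits the subgraph of the figure. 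The crux — and the main obstacle — is the spreading step: it depends on pinning down exactly which alternatives of Figures \ref{fig-ls303-} and \ref{fig-ls3-3-} survive Lemma \ref{lem-3-2}, and then on exploiting that the extra neighbour of a $3^-$-vertex sits one radius below it (so it must itself reach radius $j-1$) in the $3^-$-case, and on extracting the explicit negative-curvature edge $\{u_j,w\}$ from Theorem \ref{thm-tnc} in the $2$-case.
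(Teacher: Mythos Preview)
Your argument is correct and follows the same two–stage outline as the paper: first spread the $3^0$-state along $\{v_3,\dots,v_{l-3}\}$ by excluding $s(v_{j+1})\in\{2,3^-,3^+\}$, then read off the ladder rungs $u_j\sim u_{j+1}$ from the $(3^0,3^0)$ clause. The only noteworthy deviation is in how you extract the contradictions. In the $s(v_{j+1})=2$ case the paper, after obtaining the same subgraph via the $(3^0,2)$ structure and Corollary~\ref{cor-3-1}, passes to the auxiliary geodesic $u_j'u_jww'$ and invokes the forbidden pattern $(3^0,3^+)$ there; you instead compute $\kappa(u_j,w)<0$ directly from Theorem~\ref{thm-tnc}. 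These are equivalent---your computation is precisely what underlies the $(3^0,3^+)$ exclusion---but the paper's version is slightly more modular. In the $s(v_{j+1})=3^-$ case the paper is terse (``$(3^0,3^-)$ and Corollary~\ref{cor-3-1}''), whereas you make the mechanism explicit: Lemma~\ref{lem-3-2} leaves only the alternatives $w=u_j$ or $w\sim u_j$, and then the three neighbours $\{v_{j+1},w'\}\cup\{u_j\text{ or }v_j\}$ of $w$ all sit at radius $\ge j$, contradicting $r(w)=j$. Both routes are sound; yours is more self-contained, the paper's leans harder on the pre-packaged local structures.
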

\begin{proof}
If $l=6$, it is trivial. Suppose $l\ge 7$ and $i+1 \le l-3$.

Firstly we prove $s(v_{i+1})=3^0$. If $s(v_{i+1})=2$, by applying local structure $(3^0,2)$ on $v_{i-1}v_iv_{i+1}v_{i+2}$ and Corollary \ref{cor-3-1}, we know that $G$ contains the following figure as a subgraph with $\deg(v_{i+1})=2$. As $r(u_i)>0$, there is a vertex $w_i\sim u_i$ with $r(w_i)<r(u_i)$. By applying local structure $(3^0,3^+)$ on $w_iu_iu_{i+2}u_{i+3}$, we get a contradiction. If $s(v_{i+1})=3^-$, by applying local structure $(3^0,3^-)$ on $v_{i-1}v_iv_{i+1}v_{i+2}$ and Corollary \ref{cor-3-1}, we also get a contradiction. It follows that $s(v_{i+1})=3^0$. By applying local structure $(3^0,3^0)$ on $v_{i-1}v_iv_{i+1}v_{i+2}$, we have $u_i \sim u_{i+1}$. If $i-1 \ge 3$, with the similar argument, we can also prove that $s(v_{i-1})=3^0$.
\begin{figure}[H]
\centering
\includegraphics[width=0.3\textwidth]{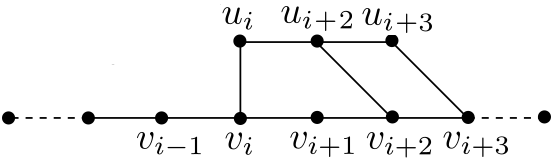}
\caption{}
\end{figure}
By taking the previous steps repeatedly, we complete the proof.
\end{proof}

\begin{theorem}
\label{thm-4-2}
Let $P=v_0v_1\cdots v_l$, $l \ge 6$, be a diameter of $G \in \mathcal{G}$. If there exists a $i \in \{1,2,\cdots,l-1\}$ such that $s(v_i)=3^0$, then $G$ is a quasi-ladder or isometric to the particular graph (d) in Figure \ref{fig-main-1}.
\end{theorem}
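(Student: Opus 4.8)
The plan is to carry out a case analysis on the location and behaviour of the vertex $v_i$ with $s(v_i)=3^0$, reducing everything to the local-structure results already proved. First I would observe that by Corollary \ref{cor-3-1}, no $v_j$ on $P$ can be in state $3^+$ or $3^-$ for $j$ in the interior range: indeed a $3^+$ forces a chain of $3^+$'s terminating in subgraph (b) or (c) of the Corollary figure, and combining such a chain with the presence of a $3^0$ vertex and Lemma \ref{lem-3-2} (which forbids configurations (c) of Figure \ref{fig-ls3+30}, etc.) quickly produces an edge of negative curvature or a longer geodesic, contradicting that $P$ is a diameter; the same applies to $3^-$ by the symmetric statement. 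So on the interior of $P$ every vertex is either in state $2$ or state $3^0$, and at least one is $3^0$.

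Next I would treat the generic case where some $3^0$ vertex $v_i$ has $3\le i\le l-3$. Here Lemma \ref{lem-4-3} applies directly and gives $s(v_j)=3^0$ for all $j$ in $\{3,\dots,l-3\}$ together with the "ladder spine" subgraph shown there, i.e. the extra neighbors $u_j$ form a parallel path with $u_j\sim u_{j+1}$. It remains to analyze the two ends. Near $v_0$ one examines $s(v_1)$ and $s(v_2)$: the possibilities are governed by the cases $(2,3^0)$, $(3^0,3^0)$ of Lemma \ref{lem-3-1} and Lemma \ref{lem-3-3}; if $v_2$ (or $v_1$) is $3^0$ one must check whether subgraph (b) of Figure \ref{fig-ls3030} appears, in which case Lemma \ref{lem-3-3} forces $G$ to be the particular graph (d) of Figure \ref{fig-main-1}. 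Otherwise the end must "cap off": using Corollary \ref{cor-tnc} one forces $\deg(v_0)=\deg(v_l)=1$, or the spine closes up via $u$-vertices, and by symmetry the same at the other end. Assembling the spine plus the two caps yields exactly a quasi-ladder as in Figure \ref{fig-main-2}.

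The remaining case is when every $3^0$ vertex of $P$ lies in $\{1,2\}\cup\{l-2,l-1\}$, i.e. no $3^0$ occurs in the deep interior. Then on $\{3,\dots,l-3\}$ all vertices are in state $2$, so that stretch is a bare geodesic segment, and one must show this cannot actually happen once we also have a $3^0$ near an end — applying the local structures $(3^0,2)$ and $(2,2)$ of Lemma \ref{lem-3-1} repeatedly along $P$, together with Corollary \ref{cor-tnc}, should either propagate the $3^0$ inward (contradicting the case assumption) or terminate in a configuration of too-small diameter or negative curvature. Finally, throughout, one has to handle the subgraph $G_B$ induced on the vertices off $P$ and its extra-neighbor path: if the left end of the spine is connected through $G_B$ to the right end, Lemma \ref{lem-4-1} with $C_1,C_2$ the two "rungs" at the ends produces a long geodesic cycle, and then Lemma \ref{lem-4-2} forces $G$ to be a cycle, prism, or M\"obius ladder — but a quasi-ladder with a genuine spine of length $\ge6$ is none of these unless it degenerates, so this possibility is excluded or absorbed.

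The main obstacle I expect is the bookkeeping at the two ends of $P$: the local-structure lemma leaves several admissible subgraphs for each pair $(s(v_1),s(v_2))$ and $(s(v_{l-1}),s(v_{l-2}))$, and one must rule out all the ones that would create an edge of negative curvature, a degree-$4$ vertex, or a geodesic path longer than $l$, while retaining precisely the finitely many cappings that close a quasi-ladder or give graph (d). Keeping this enumeration exhaustive but non-redundant — and correctly invoking Lemma \ref{lem-3-2} to kill the "bad" branches of Figures \ref{fig-ls303-}, \ref{fig-ls3+3-}, \ref{fig-ls3+30} — is the delicate part; the interior is essentially handled for free by Lemma \ref{lem-4-3}.
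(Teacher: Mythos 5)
Your opening reduction is where the argument breaks. You claim that, because some $v_i$ has $s(v_i)=3^0$, Corollary \ref{cor-3-1} lets you exclude the states $3^+$ and $3^-$ on the interior of $P$, so that every interior vertex is in state $2$ or $3^0$. This is false, and no contradiction can be derived, because the quasi-ladder itself provides counterexamples: depending on how the diameter path threads through a quasi-ladder, one can have $s(v_1)=3^0$ and $s(v_j)=3^-$ for all $j\in\{2,\dots,l-1\}$ (the path zigzags along the ladder, each rung-neighbor lying closer to the root), and reversing the path produces a diameter with a long run of $3^+$ states coexisting with a $3^0$. Since these configurations occur in graphs of $\mathcal{G}$, the combination ``$3^0$ present plus $3^\pm$ on the interior'' cannot lead to negative curvature or a longer geodesic. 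The paper's proof has to confront exactly this: in its case $k=\min\{i:s(v_i)=3^0\}=1$ it explicitly allows $s(v_2)=3^-$, notes that then $s(v_i)=3^-$ for all later $i$ by Corollary \ref{cor-3-1}, and disposes of the branch not by contradiction but by observing that $u_0v_1v_2\cdots v_l$ is another diameter and redrawing, which folds the configuration back into the $3^0$-spine picture. Your proposal has no mechanism for this re-choice of diameter, and the subsequent structure of your argument (``interior is $2$ or $3^0$, then apply Lemma \ref{lem-4-3}'') rests on the false dichotomy.

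Two further points. First, the paper's route is organized around $k=\min\{i:s(v_i)=3^0\}$, which Lemma \ref{lem-4-3} confines to $\{1,2,3,l-2,l-1\}$; the delicate work is the finite enumeration of left-end and right-end cappings (seven right-end forms), with the particular graph of Figure \ref{fig-main-1} arising only in the case $k=3$, $l=6$ via Lemma \ref{lem-3-3} --- your sketch gestures at this (``cap off'', $\deg(v_0)=\deg(v_l)=1$) but the actual admissible ends are more varied than degree-one caps, and asserting that the all-$2$ interior case ``should'' terminate in a contradiction is precisely the part that needs the explicit local-structure applications, not a plausibility claim. Second, your appeal to Lemmas \ref{lem-4-1} and \ref{lem-4-2} is not needed here: once the $3^0$ spine is established, every interior vertex of $P$ and of the parallel $u$-path already has degree $3$, so there is no room for a connection through the off-path part of $G$; the geodesic-cycle lemmas are used by the paper in Theorems \ref{thm-4-1} and \ref{thm-4-3}, not in this one.
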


\begin{proof}

Let $k=\min\{i:s(v_i)=3^0,1\le i \le l-1\}$. By Lemma \ref{lem-4-3}, we have $k \in \{1,2,3,l-2,l-1\}$.

Case (i): $k=1$. $G$ contains (a) of the following figure as a subgraph. We claim that $s(v_j)=3^0$ or $3^-$ for all $2\le j \le l-3$. If $s(v_2)=2$, by applying local structure $(3^0,2)$ on $v_0v_1v_2v_3$, we have that $G$ has a subgraph (b) of the following figure. By applying local structure $(3^0,3^+)$ on $v_0u_0u_3u_4$, we get a contradiction. If $s(v_2)=3^+$, we also get a contradiction by applying $(3^0,3^+)$ on $v_0v_1v_2v_3$. So $s(v_2)=3^0$ or $3^-$.
\begin{figure}[H]
\centering
\subfigure[]{
\includegraphics[width=0.27\textwidth]{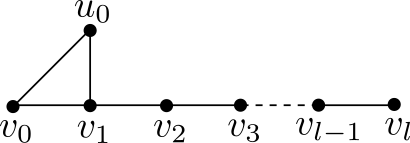}} \qquad
\subfigure[]{
\includegraphics[width=0.27\textwidth]{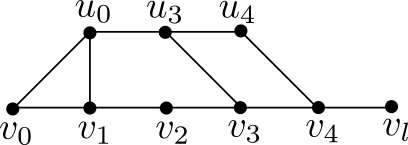}}
\caption{}
\end{figure}

If $s(v_2)=3^0$, we have $u_0 \sim u_2$ as shown in (a) of the following figure by applying local structure $(3^0,3^0)$ on $v_0v_1v_2v_3$. The previous argument tells us that $s(v_3)\ne 2,3^+$. It is not hard to verify that $s(v_3)\ne 3^-$ neither. So $s(v_3)=3^0$ and $u_2 \sim u_3$. By Lemma \ref{lem-4-3}, $G$ has a subgraph (a) of the following figure. Moreover, $\deg(v_0)=2$, otherwise $v_0$ has an extra neighbor $w_0$. Then $\deg(w_0)=1$ by Corollary \ref{cor-tnc} and $d_G(w_0,v_l)=l+1$, which is contrary to $\diam(G)=l$. If $s(v_2)=3^-$, by Corollary \ref{cor-3-1} we have $s(v_i)=3^-$ for $i\in \{2,3,\cdots,l-1\}$ and $G$ has a subgraph (b) of the following figure. Moreover, $\deg_G(u_0)=2$ and $u_0v_1v_2\cdots v_l$ is also a diameter of $G$. We can redraw (b) as shown in (c) of the following figure, which contains (a) as a subgraph. So we only need to consider the situation that $G$ contains a subgraph (a).

\begin{figure}[H]
\centering
\subfigure[]{
\includegraphics[width=0.27\textwidth]{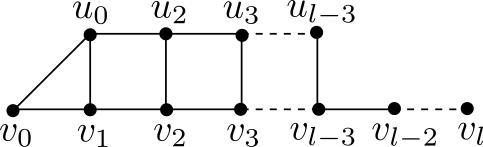}} \ \
\subfigure[]{
\includegraphics[width=0.27\textwidth]{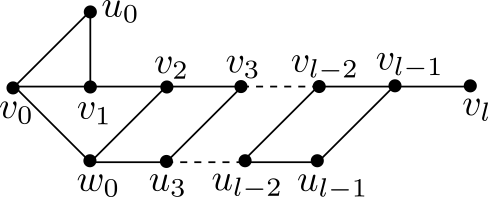}} \ \
\subfigure[]{
\includegraphics[width=0.27\textwidth]{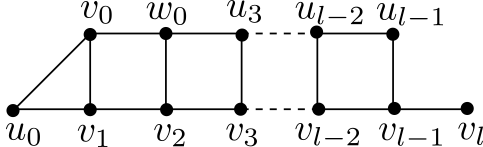}} \ \
\caption{}
\label{}
\end{figure}

As we see, the left side of $G$ is fixed, we start to study the structure of the right side. If $\deg(v_{l-2})=2$, $G$ contains a subgraph (a) of the following figure by applying local structure $(3^0,2)$ on $v_{l-4}v_{l-3}v_{l-2}v_{l-1}$. If $\deg(v_{l-2})=3$, we have $s(v_{l-2})=3^0$ and $u_{l-3}\sim u_{l-2}$. With similar arguments, we classify all possible structures of the right side of $G$ as shown in the following figure.
\begin{figure}[H]
\centering
\subfigure[]{
\includegraphics[width=0.27\textwidth]{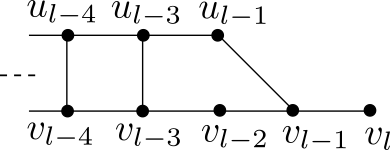}} \ \
\subfigure[]{
\includegraphics[width=0.27\textwidth]{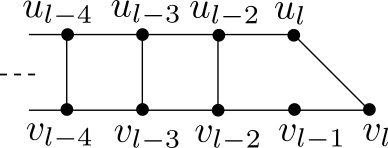}} \ \
\subfigure[]{
\includegraphics[width=0.27\textwidth]{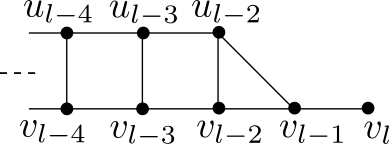}} \ \
\subfigure[]{
\includegraphics[width=0.27\textwidth]{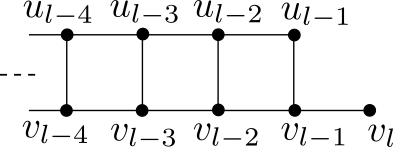}} \ \
\subfigure[]{
\includegraphics[width=0.27\textwidth]{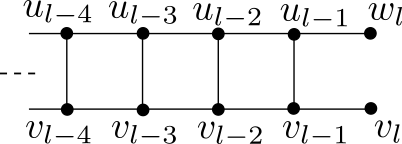}} \ \
\subfigure[]{
\includegraphics[width=0.27\textwidth]{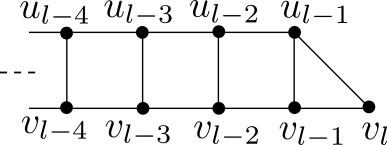}} \ \
\subfigure[]{
\includegraphics[width=0.27\textwidth]{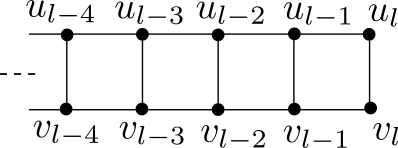}} \ \
\caption{}
\label{fig-r1}
\end{figure}

Case (ii): $k=2$. By applying $(\cdot,3^0)$ on $v_0v_1v_2v_3$, $G$ contains a subgraph (a), (b) or (c) shown in the following figure. Obviously situation (c) is contained in (b) as $u_0u_2v_2v_3\cdots$ in (c) is also a diameter. We only need to consider the situation that $G$ has a subgraph (a) or (b). It is easy to verify that $\deg(v_0)=1$ in (a) and $\deg(v_0)=\deg(v_1)=\deg(u_0)=2$ in (b). So the left-hand side is fixed. The right-hand side of $G$ is the same with that in case (i).
\begin{figure}[H]
\centering
\subfigure[]{
\includegraphics[width=0.3\textwidth]{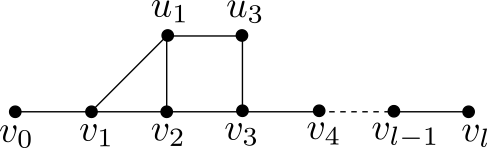}} \ \
\subfigure[]{
\includegraphics[width=0.3\textwidth]{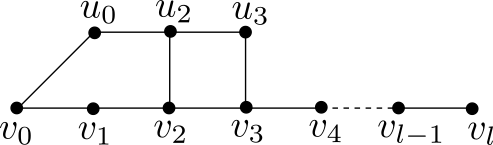}} \ \
\subfigure[]{
\includegraphics[width=0.3\textwidth]{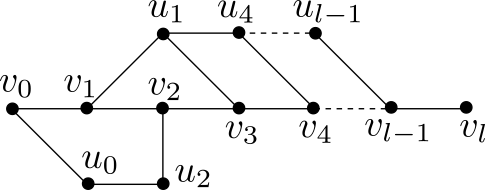}} \ \
\caption{}
\label{}
\end{figure}

Case (iii): $k=3$. If $l\ge 7$, the left side of $G$ is shown as following. The available structures of the right side are the same with that in case (i). If $l=6$, $G$ has an additional available structure shown in (d) of Figure \ref{fig-main-1}.

\begin{figure}[H]
\centering
\includegraphics[width=0.3\textwidth]{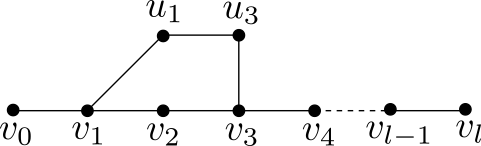}
\caption{}
\end{figure}

Case (iv): $k=l-2$. It is easy to verify this case can not happen.

Case (v): $k=l-1$. $G$ has a subgraph (a) or (b) shown in the following figure. As $v_l v_{l-1} \cdots v_0$ is also a diameter, graphs in this case have been discussed in the previous cases.
\begin{figure}[H]
\centering
\subfigure[]{
\includegraphics[width=0.3\textwidth]{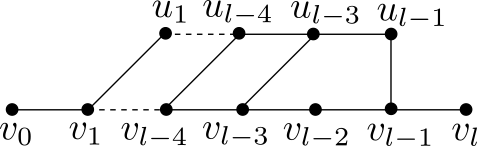}} \qquad
\subfigure[]{
\includegraphics[width=0.3\textwidth]{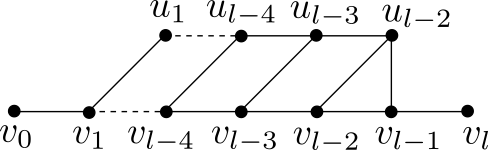}}
\caption{}
\end{figure}

By previous discussions, we prove that $G$ must be a quasi-ladder graph with the following structures or isometric to the particular graph (d) in Figure \ref{fig-main-1}.
\begin{figure}[H]
\centering
\subfigure[Quasi-ladder]{
\includegraphics[width=0.3\textwidth]{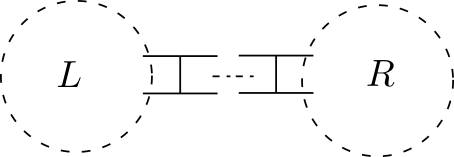}} \qquad
\subfigure[Forms of L]{
\includegraphics[width=0.2\textwidth]{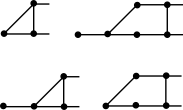}} \qquad
\subfigure[Forms of R]{
\includegraphics[width=0.3\textwidth]{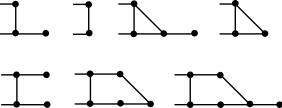}}
\caption{}
\label{Fig.main2}
\end{figure}
\end{proof}

\begin{theorem}
\label{thm-4-3}
Let $P=v_0v_1\cdots v_l$, $l \ge 6$, be a diameter of $G \in \mathcal{G}$. If for all $1\le i \le l-1, s(v_i)\ne 3^0,$ and there exists $v_i$ such that $s(v_i)=3^-$, then $G$ is a prism graph, a M\"obius ladder or a quasi-ladder.
\end{theorem}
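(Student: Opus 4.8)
The plan is to run the same three–step skeleton used for Theorems \ref{thm-4-1} and \ref{thm-4-2}: first use the local structure results of Section 3 to determine the subgraph $H$ induced by $P$ together with the extra neighbours of its interior vertices (it will turn out to be a quasi-ladder, i.e.\ a long ``ladder body'' with two small caps $L,R$ as in Figure \ref{fig-main-2}); then observe that every vertex of $H$ other than its two ends has all of its $G$-neighbours inside $H$, so any vertex of $G$ outside $H$ can attach only at the two ends; and finally split according to whether this exterior part links the two ends.

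\textbf{Step 1 (local structure along $P$).} Set $k=\min\{i:1\le i\le l-1,\ s(v_i)=3^-\}$; note $k\ge 2$, since $s(v_1)=3^-$ would mean $v_1$ has an extra neighbour at distance $<1$ from $v_0$, i.e.\ equal to $v_0$, which is absurd. By hypothesis no $v_j$ has state $3^0$, and by minimality $s(v_j)\ne 3^-$ for $j<k$, so $s(v_j)\in\{2,3^+\}$ for $1\le j\le k-1$; by Corollary \ref{cor-3-1} the indices with state $3^+$ form an initial block $v_1,\dots,v_m$, and $s(v_j)=3^-$ for all $k\le j\le l-1$, the latter producing one of the ladder-type subgraphs of Corollary \ref{cor-3-1} along $v_k\cdots v_l$. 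I would then run the local structure lemmas along the three blocks (the $3^+$ block, the flat block of state-$2$ vertices, the $3^-$ block) and across their junctions, using Lemma \ref{lem-3-2} to delete the long-path-forbidden patterns of cases $(2,3^-)$, $(3^+,2)$ and $(3^+,3^-)$, and using Theorem \ref{thm-tnc} and Corollary \ref{cor-tnc} to pin down $\deg(v_0)$, $\deg(v_l)$ and the degrees and attachments of all extra neighbours. A useful simplification: the triangle inequality $d(v_0,u)+d(u,v_l)\ge l$ forces any extra neighbour $u_j\sim v_j$ with $r(u_j)=j-1$ to satisfy $d(u_j,v_l)=l-j+1$, so read from $v_l$ the $3^-$ block is a $3^+$ block; hence the analysis of the left cap is the mirror of that of the right cap. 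The output is a short list of admissible caps $L$ (around $v_0$) and $R$ (around $v_l$), with $H$ exactly a quasi-ladder.

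\textbf{Steps 2 and 3 (exterior and conclusion).} By Step 1, every vertex of $H$ outside the two end-cliques $C_1$ (left) and $C_2$ (right) has all of its $G$-neighbours in $H$; put $A=V(H)\setminus(C_1\cup C_2)$ and $B=V\setminus V(H)$, so $E(A,B)=\emptyset$, $E(C_1,C_2)=\emptyset$ (the two ends are at distance $\ge l-2\ge 4$), and each $G(C_i)$ is complete. If the exterior $B$ does not link the two ends then, exactly as in Theorem \ref{thm-4-1}, $B=\emptyset$ and $G=H$ is a quasi-ladder; otherwise $G_B:=G(B\cup C_1\cup C_2)$ is connected and Lemma \ref{lem-4-1} yields a geodesic cycle $C$ with $|C|\ge d_{G_A}(C_1,C_2)+d_{G_B}(C_1,C_2)\ge 2(l-2)\ge 8$, so Lemma \ref{lem-4-2} forces $G$ to be a cycle, a prism graph or a M\"obius ladder; since $G$ has a vertex of degree $3$ on $P$ it is not a cycle, hence a prism graph or a M\"obius ladder.

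\textbf{Main obstacle.} Everything substantive sits in Step 1: the number of block-junction configurations and end patterns to examine is large, and for each I must decide whether it is outright impossible (via Theorem \ref{thm-tnc} / Corollary \ref{cor-tnc}), forces $\diam(G)<6$ (hence is irrelevant in $\mathcal{G}$ here), or collapses to one of the finitely many cap shapes of a quasi-ladder. The delicate points are showing the ladder body cannot branch or spawn a longer geodesic, and verifying for \emph{every} interior vertex the saturation claim that makes $E(A,B)=\emptyset$ in Step 2; once those are in hand, the cycle/ladder dichotomy is a routine application of Lemmas \ref{lem-4-1} and \ref{lem-4-2}, just as in Theorem \ref{thm-4-1}.
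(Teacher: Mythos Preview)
Your proposal is correct and follows essentially the same strategy as the paper: set $k=\min\{i:s(v_i)=3^-\}$, use Corollary~\ref{cor-3-1} and the local-structure lemmas to determine the ladder body and caps, and invoke Lemmas~\ref{lem-4-1} and~\ref{lem-4-2} when the exterior links the two ends. The only organisational difference is that the paper runs Step~1 as a case split on $k\in\{2,3,\ge 4,l-1\}$ (reducing $k\ge 4$ to $k=3$ via the relabelling $u_{i-2}\leftrightarrow v_{i-1}$) rather than your three-block decomposition, and applies the geodesic-cycle dichotomy inside the $k=2$ case before finishing the cap list, but the underlying argument is the same.
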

\begin{proof}
Let $k=\min\{i: 1\le i \le l-1\ ,\ s(v_i)=3^-\}$, then $k\ge 2$.

Case (i): If $k=2$, $G$ has a subgraph (a) of the following figure. By Corollary \ref{cor-3-1}, $G$ has a subgraph (b) or (c) of the following figure. By switching $u_0$ and $v_1$ in (b), we get (b) is the same with (c). So we only consider the case that $G$ contains a subgraph (b).
\begin{figure}[H]
\centering
\subfigure[]{
\includegraphics[width=0.3\textwidth]{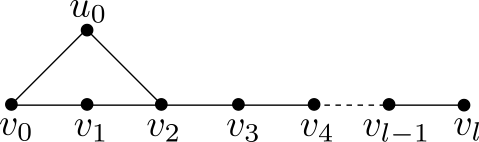}} \ \
\subfigure[]{
\includegraphics[width=0.3\textwidth]{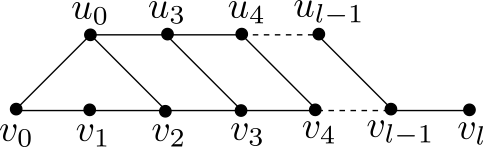}} \ \
\subfigure[]{
\includegraphics[width=0.3\textwidth]{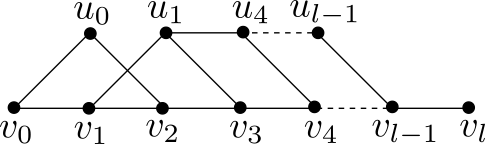}} \ \
\caption{}
\end{figure}

Let $C_1=G(\{v_0,v_1\})$, $G_A=G(\{u_0,u_3,\cdots,u_{l-2},v_2,v_3\cdots,v_{l-2}\})$, $C_2=G(\{v_{l-1},u_{l-1}\})$ and $G_B=G(V\setminus\{u_0,u_3,\cdots,u_{l-1},v_0,v_1\cdots,v_{l}\})$. If $C_1$ is connected to $C_2$ via $G_B$, we get a geodesic cycle $C$ with $|C| \ge 8$ by Lemma \ref{lem-4-1} and $G$ must be a prism graph or M\"obius ladder by Lemma \ref{lem-4-2}. In the following, we suppose that  $C_1$ is not connected to $C_2$ via $G_B$. With a similar argument in the proof of Theorem \ref{thm-4-2}, the left side of $G$ has one of the following forms.
\begin{figure}[H]
\centering
\subfigure[]{
\includegraphics[width=0.3\textwidth]{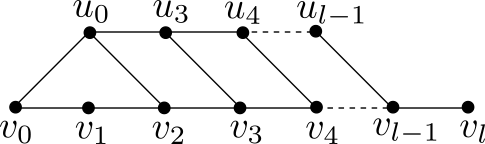}} \qquad
\subfigure[]{
\includegraphics[width=0.3\textwidth]{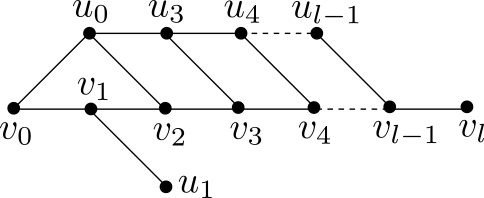}}
\caption{}
\label{fig-l3}
\end{figure}

We start to deal with the right-hand side. If $\deg(u_{l-1})=2$, we have $\deg(v_l)=1$ by Corollary \ref{cor-tnc} and the structure of the right-hand side is (a) of the following figure. If $u_{l-1}$ has an extra neighbor $w_{l-1}$, we have $r(u_{l-1}) < r(w_{l-1})$. Otherwise, $w_{l-1}$ has an extra neighbor $w'$ and $\kappa(w',w_{l-1})<0$ by Corollary \ref{cor-tnc}. If $\deg(w_{l-1})=1$, we have $\deg(v_l)=1$ and the right-hand side structure is shown in (b) of the following figure. With similar arguments, we find all possible forms of the right side of $G$ as shown in the following figure.

\begin{figure}[H]
\centering
\subfigure[]{
\includegraphics[width=0.3\textwidth]{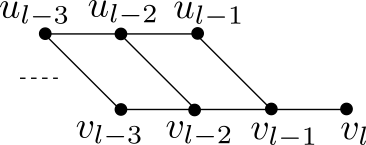}} \ \
\subfigure[]{
\includegraphics[width=0.3\textwidth]{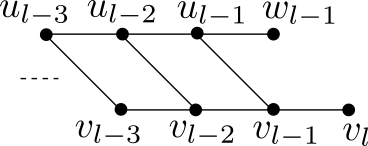}} \ \
\subfigure[]{
\includegraphics[width=0.3\textwidth]{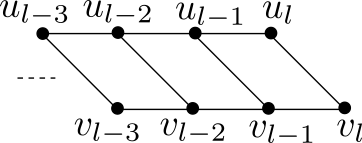}} \ \
\subfigure[]{
\includegraphics[width=0.3\textwidth]{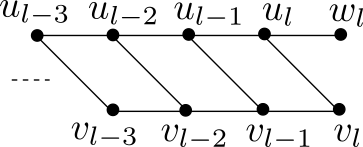}} \ \
\subfigure[]{
\includegraphics[width=0.3\textwidth]{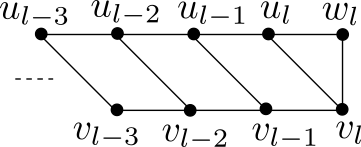}} \ \
\caption{}
\label{fig-r2}
\end{figure}


Case (ii): $k=3$. $G$ has a subgraph (a) of the following figure. The left side of $G$ must be (a) or (b) of the following figure and all possible forms of the right side have been shown in Figure \ref{fig-r2}.
\begin{figure}[H]
\centering
\subfigure[]{
\includegraphics[width=0.35\textwidth]{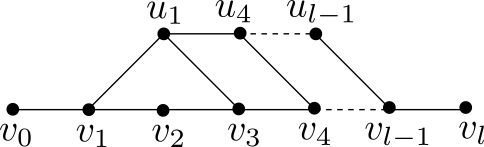}} \qquad
\subfigure[]{
\includegraphics[width=0.35\textwidth]{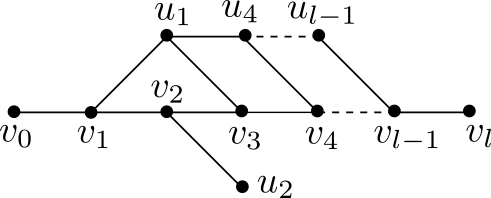}}
\caption{}
\label{fig-l4}
\end{figure}

Case (iii): $4\le k \le l-2$. $G$ has a subgraph shown in the following figure which is the same with case (ii) by switching $u_{i-2}$ and $v_{i-1}$, $4\le i \le k$.
\begin{figure}[H]
\centering
\includegraphics[width=0.45\textwidth]{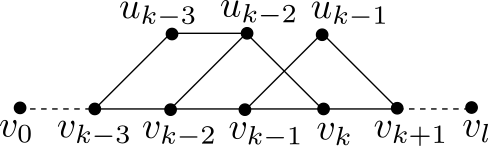}
\caption{}
\end{figure}

Case (iv): $j=l-1$. $G$ contains a subgraph shown in the following figure which is the same with (a) of Figure \ref{fig-l4} if $u_{l-2}\sim v_l$, or (b) of Figure \ref{fig-l4} if $u_{l-2} \nsim v_l$ as $P'=v_lv_{l-1}\cdots v_0$ is also a diameter.
\begin{figure}[H]
\centering
\includegraphics[width=0.45\textwidth]{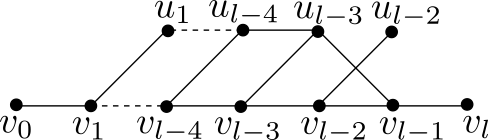}
\caption{}
\end{figure}

By previous discussions, we prove that if $G$ is not a prism graph or a M\"{o}bius ladder, then it has one of the following forms, which are isometric to graphs in Figure \ref{fig-thm3-quasi-ladder}.

\begin{figure}[H]
\centering
\subfigure[]{
\includegraphics[width=0.3\textwidth]{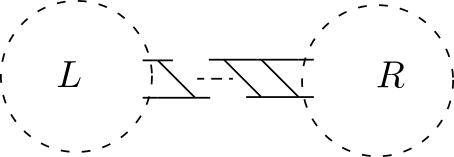}} \ \ \
\subfigure[Forms of L]{
\includegraphics[width=0.2\textwidth]{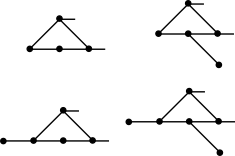}} \ \ \
\subfigure[Forms of R]{
\includegraphics[width=0.3\textwidth]{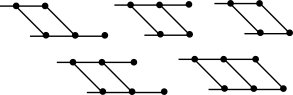}}
\caption{}
\end{figure}

\begin{figure}[H]
\centering
\subfigure[Quasi-ladder]{
\includegraphics[width=0.3\textwidth]{g19}} \ \ \
\subfigure[Forms of L]{
\includegraphics[width=0.2\textwidth]{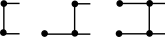}} \ \ \ \
\subfigure[Forms of R]{
\includegraphics[width=0.3\textwidth]{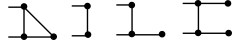}}
\caption{}
\label{fig-thm3-quasi-ladder}
\end{figure}

\end{proof}

\begin{theorem}
\label{thm-4-4}
Let $P=v_0v_1\cdots v_l$, $l \ge 6$, be a diameter of $G \in \mathcal{G}$. Suppose that for all  $1\le i \le l-1$, $s(v_i)\ne 3^0, s(v_i)\ne 3^-$, and there exists $s(v_i)=3^+$, then $G$ is a prism graph, a M\"obius ladder or a quasi-ladder.
\end{theorem}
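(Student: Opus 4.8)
The plan is to mirror the structure of the proof of Theorem \ref{thm-4-3}, exploiting the symmetry between the $3^-$ and $3^+$ states. Recall that reversing the diameter $P'=v_lv_{l-1}\cdots v_0$ turns $G$ into a graph rooted at $v_l$, and under this reversal the state $3^+$ along $P$ becomes $3^-$ along $P'$ (and vice versa), while $3^0$ and $2$ are preserved. So the hypothesis here — no $v_i$ has state $3^0$ or $3^-$, and some $v_i$ has state $3^+$ — is exactly the hypothesis of Theorem \ref{thm-4-3} applied to $P'$. Hence the statement follows immediately by reversing the diameter and invoking Theorem \ref{thm-4-3}.

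If one prefers a self-contained argument, I would instead run the enumeration directly. By Corollary \ref{cor-3-1}, if $s(v_i)=3^+$ for some $i$, then $s(v_j)=3^+$ for all $1\le j\le i$, and $G$ contains subgraph (b) or (c) of the figure following Corollary \ref{cor-3-1}. Let $k=\max\{i: 1\le i\le l-1,\ s(v_i)=3^+\}$; then, by the same corollary and the exclusion of $3^0,3^-$, we have $s(v_j)=3^+$ for $j\le k$ and $s(v_j)=2$ for $k<j\le l-1$. First I would pin down the right-hand side: the vertices $v_{k+1},\dots,v_{l-1}$ all have degree $2$, and applying Corollary \ref{cor-tnc} together with the $(3^+,2)$ case of Lemma \ref{lem-3-1} (noting that (c) of Figure \ref{fig-ls3+2} is excluded by Lemma \ref{lem-3-2}) forces a ladder-type rung at the joint $v_k v_{k+1}$; the possible right-hand tail configurations are exactly the mirror images of those in Figure \ref{fig-r2}. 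Then I would analyze the left-hand side, using the $(3^+,3^+)$ case of Lemma \ref{lem-3-1} to propagate the rungs $u_j\sim u_{j+1}$ down to $j=1$, obtaining the mirror of the forms in Figure \ref{fig-l4}; here $\deg(v_0)$ is constrained by Corollary \ref{cor-tnc} and by the diameter bound (an extra neighbor $w_0$ of $v_0$ would give a leaf at distance $l+1$ from $v_l$).

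The core dichotomy is the same as before: set $C_1=G(\{v_0,u_1\})$ (or the appropriate $2$-clique at the left end), $C_2$ the analogous $2$-clique at the right end, $G_A$ the induced subgraph on the "spine plus rungs," and $G_B$ the rest of $G$. If $C_1$ is connected to $C_2$ through $G_B$, then Lemma \ref{lem-4-1} produces a geodesic cycle $C$ with $|C|\ge 8$, and Lemma \ref{lem-4-2} forces $G$ to be a cycle, a prism graph, or a M\"obius ladder. If $C_1$ is not connected to $C_2$ through $G_B$, then $G_B$ contributes nothing beyond possible leaves, and assembling the admissible left-hand and right-hand ends along the $3^+$-chain yields precisely a quasi-ladder. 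The main obstacle — and the only place real work is needed — is the careful finite case-check that no other left/right end configuration survives the curvature constraints and the diameter bound; this is routine given Lemma \ref{lem-3-1}, Lemma \ref{lem-3-2}, and Corollary \ref{cor-tnc}, but it is where the bookkeeping lives. Because of the $3^-\leftrightarrow 3^+$ symmetry, however, I would present the short proof: the result is Theorem \ref{thm-4-3} read along the reversed diameter.
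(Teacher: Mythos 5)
The ``short proof'' you elect to present rests on a false symmetry. For an inner vertex $v_i$ of the diameter with extra neighbor $u_i$, the only relation between the two orientations is the triangle inequality $d(v_0,u_i)+d(u_i,v_l)\ge d(v_0,v_l)=l$. Consequently a $3^-$ vertex does always become $3^+$ after reversal, but a $3^+$ vertex may become $3^-$, $3^0$, or remain $3^+$, and a $3^0$ vertex may become $3^+$: the states are not swapped. A concrete counterexample is the quasi-ladder with pendant ends, i.e.\ the path $v_0\cdots v_l$ together with a bottom path $u_1\cdots u_{l-1}$ and rungs $u_i\sim v_i$: there $d(v_0,u_i)=i+1$ and $d(v_l,u_i)=l-i+1$, so every inner vertex has state $3^+$ with respect to $P$ \emph{and} with respect to $P'=v_lv_{l-1}\cdots v_0$. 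This graph lies in $\mathcal{G}$ and satisfies the hypothesis of Theorem \ref{thm-4-4}, yet after reversal it again satisfies the hypothesis of Theorem \ref{thm-4-4} rather than that of Theorem \ref{thm-4-3} (no vertex is $3^-$ or $3^0$ along $P'$). So the claimed immediate reduction ``Theorem \ref{thm-4-3} read along the reversed diameter'' is unavailable and, on the very graphs the theorem is meant to produce, circular.

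The paper also reduces to the earlier theorems via the reversed path, but only after real structural work that your argument skips: using the $(3^+,2)$ case of Lemma \ref{lem-3-1}, Lemma \ref{lem-3-2} and the standing exclusion of $3^0$ and $3^-$, it first shows that $k=\max\{i:s(v_i)=3^+\}$ must equal $l-2$ or $l-1$ (if $k\le l-3$, the forced local structure at $v_kv_{k+1}$ creates a $3^-$ or $3^0$ vertex among $v_1,\dots,v_{l-1}$, a contradiction), and then it pins down the explicit configurations at the right end; it is these concrete subgraphs, not the abstract state pattern, that are recognized as cases already handled when the diameter is read backwards. Your fallback sketch goes in this direction and is essentially a rerun of the Theorem \ref{thm-4-3} analysis, but the decisive steps (that the tail of $2$-states after the $3^+$ block has length at most two, equivalently that the right-hand ends are exactly the mirror images of Figure \ref{fig-r2}) are asserted and deferred as ``routine bookkeeping'' rather than proved. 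Since the argument you actually submit is the symmetry one, the proposal has a genuine gap at its central step.
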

\begin{proof}
In fact we will prove that graphs considered in this theorem have been discussed in the previous theorems. Let $k=\max\{i: 1\le i \le l-1,\ s(v_i)=3^+\}$, then $k = l-2$ or $l-1$.

Case (i): $k=l-2$. Then $\deg(v_{l-1})=2$ and $G$ has a subgraph shown in figure (a). By applying local structure $(3^+,2)$ on $v_{l-3}v_{l-2}v_{l-1}v_{l}$, we get that $G$ has a subgraph (b) or (c). We have discussed those graphs by changing $P=v_0v_1\cdots v_l$ to $P'=v_l v_{l-1}\cdots v_0$.
\begin{figure}[H]
\centering
\subfigure[]{
\includegraphics[width=0.3\textwidth]{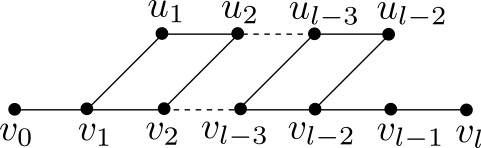}} \ \
\subfigure[]{
\includegraphics[width=0.3\textwidth]{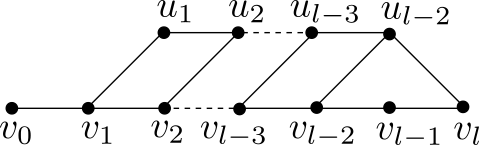}} \ \
\subfigure[]{
\includegraphics[width=0.3\textwidth]{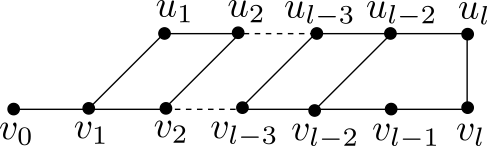}}
\caption{}
\end{figure}

Case (ii): $k=l-1$. $G$ contains one of the followings as a subgraph, which have been discussed in the previous theorems by changing $P=v_0v_1\cdots v_l$ to $P'=v_l v_{l-1}\cdots v_0$.
\begin{figure}[H]
\centering
\subfigure[]{
\includegraphics[width=0.32\textwidth]{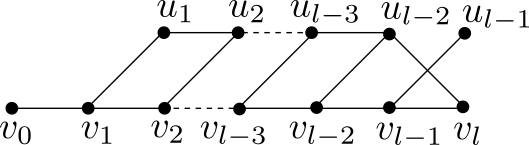}} \ \
\subfigure[]{
\includegraphics[width=0.32\textwidth]{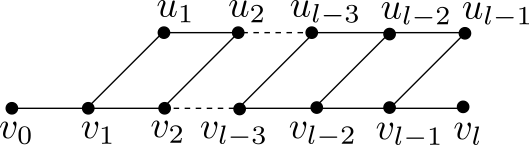}}
\caption{}
\end{figure}

\end{proof}

Now, we give the proofs of Theorem \ref{thm-main-finite} and \ref{thm-main-infty}.
\begin{proof}[Proof of Theorem \ref{thm-main-finite}]
By applying Theorem \ref{thm-4-1}, \ref{thm-4-2}, \ref{thm-4-3} and \ref{thm-4-4}, we obtain the conclusion.
\end{proof}
\begin{proof}[Proof of Theorem \ref{thm-main-infty}]
As $G$ is an infinite graph, we can find an infinite geodesic path $P$ which is isometric to $\Z$ or $\Z^+$. By applying the arguments used for finite graphs, we get the conclusion.
\end{proof}

\section{Applications}
In this section, as an application of the main results, we establish the following strong Liouville property.
\begin{theorem}\label{liouville}
Let $G \in \mathcal{G}$ be an infinite graph, then all positive harmonic functions on $G$ are constant.
\end{theorem}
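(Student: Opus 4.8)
The plan is to combine the classification from Theorems \ref{thm-main-infty} and \ref{thm-main-3} with an elementary, direct analysis of positive harmonic functions on each graph in the list of Figure \ref{fig-main-3}. First I would invoke Theorem \ref{thm-main-infty}: since $G \in \mathcal{G}$ is infinite, $G$ is isomorphic to one of the finitely many model graphs (a)--(j) in Figure \ref{fig-main-3}. It therefore suffices to verify the Liouville property graph by graph. Every one of these graphs has a ``backbone'' that is a one-sided or two-sided infinite line, with only a bounded-width decoration attached; in particular each such $G$ has at most two ``ends'' and a very rigid combinatorial structure, which makes the harmonic equation $\Delta f(u) = \sum_{v\sim u}(f(v)-f(u)) = 0$ amenable to a hands-on recursion.

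The key steps, in order, are as follows. (1) Reduce to the model graphs via Theorem \ref{thm-main-infty}. (2) For the both-side infinite line $\Z$ and the one-side infinite line $\Z^{\ge 0}$, the harmonic equation is the discrete second-order recursion $f(n+1) - 2f(n) + f(n-1) = 0$ (with the degree-$1$ endpoint forcing $f(1) = f(0)$ in the one-sided case), so $f$ is affine in $n$; positivity on an infinite ray forces the slope to be $0$, hence $f$ is constant. (3) For the infinite ladder and the seven infinite quasi-ladders, set up coordinates $(n, j)$ where $n$ indexes position along the backbone and $j$ indexes the (at most two) vertices in each ``rung,'' and write out the finitely many harmonic equations per column. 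Because the transverse direction is finite, one extracts from these a linear recursion relating column $n+1$ data to column $n$ and column $n-1$; the transfer matrix is a fixed small matrix whose eigenvalues one computes. The eigenvalue $1$ (with possibly a Jordan block) yields affine-growth solutions, and any eigenvalue off the unit circle yields solutions that blow up or change sign in one direction. (4) In every case, a harmonic function that stays \emph{positive} on the whole infinite graph must avoid the exponentially growing/sign-changing modes and the linearly growing mode (on a one-ended piece a nonzero affine function eventually becomes negative; on a two-ended piece, positivity on both ends kills the slope), leaving only the constant solution. One should also observe that for the model graphs with a degree-$1$ pendant vertex the pendant simply duplicates its neighbor's value and does not enlarge the solution space. (5) Conclude that on each model graph the only positive harmonic functions are constants, hence the same holds for every $G \in \mathcal{G}$.

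The main obstacle I expect is purely bookkeeping: there are ten model graphs, and the quasi-ladders differ in exactly which rungs carry an extra diagonal edge or a pendant, so each has a slightly different column-to-column transfer relation and one must check them all. To keep this manageable I would isolate a single lemma: if $G$ has an infinite geodesic ray $v_0 v_1 v_2 \cdots$ along which the induced structure is eventually periodic with bounded transverse width, and $f$ is a positive harmonic function on $G$, then $\sup_n f(v_n) < \infty$ forces $f$ to be asymptotically constant along the ray; this follows from the transfer-matrix description together with positivity (a standard Harnack-type / maximum-principle argument: a bounded harmonic function on such a ray with an elliptic recursion cannot oscillate and cannot have nonzero asymptotic slope). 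Applying this lemma on each end of $G$ and then using connectedness and the (finite) remaining equations pins $f$ down to a constant. An alternative, even shorter route, is to quote Jost--M\"unch--Rose \cite{jost2019liouville}: bounded harmonic functions on $G \in \mathcal{G}$ are constant; then for a positive harmonic function $f$ it is enough to show $f$ is bounded, which again follows from the asymptotic-slope-zero argument on each of the at most two ends, since an unbounded positive harmonic function on one of these quasi-linear graphs would have to grow at least linearly along the backbone, and linear growth of $f$ together with harmonicity forces the transverse differences to be controlled, ultimately contradicting positivity on the \emph{other} end or the degree constraints. Either way, the real content is the one-dimensional recursion estimate; the rest is enumeration.
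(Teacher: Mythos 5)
Your overall route coincides with the paper's: reduce to the ten model graphs of Figure \ref{fig-main-3} via Theorem \ref{thm-main-infty}, then analyze the harmonic equation on each model, and your column/transfer-matrix reduction on the (quasi-)ladders is exactly the paper's substitution $z_n=g(x_n)-g(y_n)$, $h_n=g(x_n)+g(y_n)$, which decouples into $4z_n=z_{n-1}+z_{n+1}$ (characteristic roots $2\pm\sqrt{3}$) and $2h_n=h_{n-1}+h_{n+1}$ (affine solutions). The only real difference in the easy cases is that the paper handles the line and the infinite ladder by quoting Choquet--Deny \cite{MR119041} (or Hua--M\"unch \cite{hua2021salami}) instead of solving the recursion, and the half line by the same endpoint observation you make; your direct treatment there is fine and somewhat more self-contained.

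The soft spot is in your steps (3)--(4), which as stated would fail on the one-ended quasi-ladders. Positivity (or one-sided boundedness) toward the infinite end does \emph{not} eliminate all non-constant modes: the transverse mode with root $2-\sqrt{3}$ decays, keeps one sign and stays bounded along the ray, and the eventually affine mode $h_n=\alpha n+\beta$ with $\alpha>0$ is also positive toward infinity. So the claims ``any eigenvalue off the unit circle yields solutions that blow up or change sign'' and ``positivity leaves only the constant solution'' are false as written; these two modes can only be killed by the harmonic equations at the finite, decorated end (different for each of the seven quasi-ladders), which you defer to ``bookkeeping'' and ``the remaining finite equations'' without saying how they do the job. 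That is precisely where the paper's proof puts its work: after normalizing $g(x_0)=1$, $g(y_0)=0$, it derives the monotonicity $z_n\ge z_{n-1}$ from the maximum principle --- note this cannot follow from the interior recursion alone, since $z_n=(2-\sqrt{3})^n$ satisfies $4z_n=z_{n-1}+z_{n+1}$, so this step is exactly the exclusion of the contracting mode --- and then $z_{n+1}=4z_n-z_{n-1}\ge 3z_n$ forces $z_n\ge C\cdot 3^n$, which together with $h_n=O(n)$ gives $g(x_n)\to+\infty$, $g(y_n)\to-\infty$, contradicting one-sided boundedness. Your alternative via Jost--M\"unch--Rose \cite{jost2019liouville} has the same hole: to show $f$ is bounded you must rule out the linearly growing solution, and on a one-ended quasi-ladder there is no ``other end'' to invoke, so again the finite cap must carry the argument. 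The plan is reparable (either check the cap equations for each of the seven quasi-ladders, or reproduce the monotonicity/maximum-principle step), but the mechanism forcing constancy is the interaction with the finite end, not positivity at infinity alone.
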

\begin{proof}
By Theorem \ref{thm-main-infty}, $G$ is a line, half line, infinite ladder or infinite quasi-ladder. As line and infinite ladder are finitely generated Cayley graphs of Abelian groups, all positive harmonic functions on them are constant; see \cite{MR119041}. Besides, Hua and M\"unch also proved all positive harmonic functions are constant on graphs with two ends and nonnegative Ricci curvature, which provides another proof for the strong Liouville property of line and infinite ladder; see \cite[Theorem 5.9]{hua2021salami}.  For a half line, there is a stronger conclusion. In fact all harmonic functions on a half line are constant. So, we only consider the quasi-ladder case.


Let $f$ be a positive harmonic function on $G$. If $f$ is not a constant, there exist $i\ge 0$, such that $f(x_i)\ne f(y_i)$. Without loss of generality, we assume $i=0$. By choosing suitable constants $a$ and $b$  such that $g:=af+b$ satisfies $g(x_0) =1$ and $g(y_0)=0$, we get a harmonic function $g$ which is bounded from below or above.
\begin{figure}[H]
\centering
\includegraphics[width=0.45\textwidth]{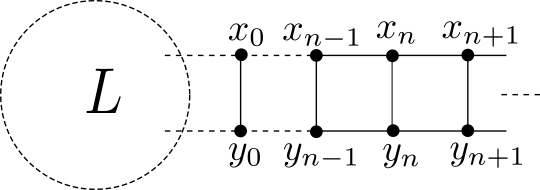}
\end{figure}

For any $n\in \mathbb{Z}^+$, we have
\[
g(x_n) = \frac13(g(x_{n-1})+g(x_{n+1})+g(y_n)),
\]
\[
g(y_n) = \frac13(g(y_{n-1})+g(y_{n+1})+g(x_n)).
\]
Let $z_n = g(x_n)-g_(y_n)$, $h_n=g(x_n)+g(y_n)$. We have $4z_n = z_{n-1}+z_{n+1}$, $2h_n = h_{n-1}+h_{n+1}$. On the one hand, it is easy to check that $h_n = O(n)$. On the other hand, $z_n \ge z_{n-1}$ by the maximum principle. Thus $z_{n+1} \ge 3z_n$, which means that $z_n \ge C\cdot3^n$ for some constant $C>0$. It follows that $g(y_n) \to -\infty$ and $g(x_n) \to +\infty$ as $n \to +\infty$. We get a contradiction as $g$ is bounded from below or above.
\end{proof}

\section*{Acknowledgments}
The authors are grateful to Prof. Bobo Hua for his guidance and support. The authors would also like to thank   Florentin M\"unch and David Cushing for their helpful advice.

\bibliographystyle{alpha}
\bibliography{ms}

\end{document}